\documentclass[12pt]{article}

\usepackage{amsmath,amsthm,amsfonts,amssymb,amscd,cite}
\usepackage{graphicx}

\allowdisplaybreaks[4]

\newtheorem {lemma}{Lemma}[section]
\newtheorem {theorem} {Theorem}[section]
\newtheorem {conjecture}{Conjecture}[section]

\usepackage[figurename=Fig.]{caption}
\allowdisplaybreaks [4]
\usepackage{fullpage}

\begin{document}

\title{Proof of a conjecture on distribution of Laplacian eigenvalues and diameter, and beyond}

\author{Leyou Xu\footnote{Email: leyouxu@m.scnu.edu.cn}, Bo Zhou\footnote{Corresponding author. Email: zhoubo@m.scnu.edu.cn}\\
School of Mathematical Sciences, South China Normal University\\
Guangzhou 510631, P.R. China}

\date{}
\maketitle

\begin{abstract}
Ahanjideh, Akbari,  Fakharan and  Trevisan proposed a conjecture  
on the distribution of the Laplacian eigenvalues of graphs:  for any connected graph of order $n$ with diameter $d\ge 2$ that is not a path, the number of Laplacian eigenvalues in the interval $[n-d+2,n]$ is at most $n-d$.
We show that the conjecture is true, and give a complete characterization of graphs for which the conjectured bound is attained. This establishes
an interesting relation between the spectral and classical parameters. \\ \\
{\it MSC:} 05C50, 05C12, 15A15\\ \\
{\it Keywords: } Laplacian eigenvalue,  diameter
\end{abstract}

\section{Introduction}

Let $G$ be a graph with vertex set $V(G)$ and edge set $E(G)$.
Let $G\cup H$ be the disjoint union of graphs $G$ and $H$. The disjoint union of $k$ copies of a graph $G$ is denoted by $kG$.
Denote by $P_n$ be the path of order $n$ and  $K_n$ the complete graph of order $n$.

For a graph $G$ with $S\subset V(G)$, denote by $G[S]$ the subgraph of $G$ induced by $S$ and $G-S$ the subgraph obtained from $G$ by deleting the vertices of $S$.

For a graph $G$ with $S\subseteq E(G)$, denote by $G-S$ the subgraph of $G$ obtained from $G$ by deleting all edges in $S$.
Particularly, if $S=\{e\}$, then we write it as $G-e$.
If $G'=G-S$ for some $S\subseteq E(G)$, then $G=G'+S$.

For $v\in V(G)$, denote by $N_G(v)$ the neighborhood of $v$ in $G$, and  $\delta_G(v)$ denotes the degree of $v$ in $G$.  For a graph $G$ of order $n$, the Laplacian matrix of  $G$  is the matrix
$L(G)=(\ell_{uv})_{u,v\in V(G)}$, where
\[
\ell_{uv}=\begin{cases}
\delta_G(u) & \mbox{if $u=v$,}\\
-1     & \mbox{if $u\ne v$ and $uv\in E(G)$,}\\
0     & \mbox{if $u\ne v$ and $uv\not\in E(G)$}.
\end{cases}
\]
That is, $L(G)$ is equal to the difference between the  diagonal
degree matrix and the adjacency matrix of $G$.
The  eigenvalues of $L(G)$ are known as the Laplacian eigenvalues of $G$, which we denote by $\mu_1(G), \dots, \mu_n(G)$, arranged in nonincreasing order. That is,
$\mu_j(G)$ is the $j$th (largest) Laplacian eigenvalue  of $G$ for $j=1,\dots, n$.
Note that any Laplacian eigenvalue of $G$ lies in the interval $[0,n]$ \cite{Me,Moh}.

 For a graph $G$ of order $n$ and   an interval $I\subseteq [0,n]$, the number of Laplacian eigenvalues of $G$ in $I$ is denoted by $m_GI$.
Evidently, $m_G[0,n]=n$ for a graph $G$ of order $n$. But one is interesting in the knowledge how the Laplacian eigenvalues are distributed in $[0,n]$, which is not well understood.
The studying of the distribution of
Laplacian eigenvalues of graphs is a natural  problem that is relevant to
the many applications related to Laplacian matrices \cite{Me,Mo}.
There are a number of results that bound the number
of Laplacian eigenvalues in subintervals of $[0, n]$ (see, e.g., \cite{AhMS,BRT,CJT,GM,GMS,GWZF,GT,HJT,Me2,ZZD}).
Recently, Jacobs, Oliveira and Trevisan \cite{JOT} and Sin \cite{Sin} independently showed that $m_G[0,2-\frac{2}{n})\ge \lceil\frac{n}{2}\rceil$ (or equivalently, $m_G[2-\frac{2}{n},n]\le \lfloor\frac{n}{2}\rfloor$) if
$G$ is a tree of order $n$, which was conjectured in \cite{TCDV}. Here we note that $2-\frac{2}{n}$ is the average degree.
As pointed out by Jacobs, Oliveira and Trevisan in \cite{JOT},
it is also a hard problem because little is known about how the Laplacian eigenvalues are
distributed in the interval $[0, n]$.

For a connected graph $G$ with $u,v\in V(G)$,
the distance between $u$ and $v$ in $G$ is the length of a
shortest path connecting them in $G$.  The diameter of  $G$ is the greatest distance
between vertices in $G$. After showing that if $G$ is a connected graph of order $n$ with diameter $d\ge 4$, then $m_G(n-d+3,n]\le n-d-1$, Ahanjideh, Akbari,  Fakharan and  Trevisan \cite{AhMS} posed the following conjecture (see also Conjecture 1 and Problems 3 and 4 in \cite{AhMS}) and verified the validity for $d\in \{2,3\}$ or  when the diameter is less than or equal to the independence number.

\begin{conjecture} \cite{AhMS}\label{con}
Let  $G$ be a connected graph of order $n$ with diameter $d\ge 2$. If $G\ne P_{d+1}$,  then
\[
m_G[n-d+2,n]\le n-d.
\]
\end{conjecture}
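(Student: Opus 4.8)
The plan is to pass to the complement and reduce the conjecture to a single clean eigenvalue inequality. Writing $\overline{G}$ for the complement of $G$, the Laplacian spectra satisfy $\mu_j(\overline{G})=n-\mu_{n-j}(G)$ for $j=1,\dots,n-1$ together with $\mu_n(\overline{G})=0$. For $i\in\{1,\dots,n-1\}$ one has $\mu_i(G)\ge n-d+2$ if and only if $\mu_{n-i}(\overline{G})\le d-2$, and since $\mu_n(G)=0<n-d+2$ this yields the exact identity
\[
m_G[n-d+2,n]=m_{\overline{G}}[0,d-2]-1=(n-1)-m_{\overline{G}}(d-2,n].
\]
Hence the conjectured bound $m_G[n-d+2,n]\le n-d$ is equivalent to $m_{\overline{G}}(d-2,n]\ge d-1$, that is, to $\mu_{d-1}(\overline{G})>d-2$, and the extremal case $m_G[n-d+2,n]=n-d$ corresponds precisely to $\mu_{d-1}(\overline{G})>d-2\ge\mu_d(\overline{G})$. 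This reformulation is attractive because it turns a statement about the top of the spectrum of $G$ into a lower bound on a single intermediate eigenvalue of $\overline{G}$.

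Next I would extract the combinatorial consequence of the diameter and feed it into interlacing. Fix a shortest path $v_0v_1\cdots v_d$ and let $S=\{v_0,\dots,v_d\}$, so $|S|=d+1$; note that $G\ne P_{d+1}$ with diameter $d$ forces $n\ge d+2$, so $S$ is a proper subset. Because the path is a geodesic, any vertex $u$ adjacent to $v_i$ and $v_j$ forces $|i-j|\le 2$, so every vertex is $G$-adjacent to at most three consecutive vertices of $S$; consequently every vertex of $\overline{G}$ has at least $d-2$ neighbours inside $S$, and $\overline{G}[S]=\overline{P_{d+1}}$. Applying Cauchy interlacing to the principal submatrix $B=L(\overline{G})[S]$ of order $d+1$ gives $\mu_{d-1}(\overline{G})\ge\lambda_{d-1}(B)$. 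Writing $B=L(\overline{P_{d+1}})+\Delta$, where $\Delta\succeq0$ is the diagonal matrix recording neighbours of each $v_i$ outside $S$, Weyl's inequality gives $\lambda_{d-1}(B)\ge\mu_{d-1}(\overline{P_{d+1}})$. A direct computation from the path spectrum yields $\mu_{d-1}(\overline{P_{d+1}})=d-1-2\cos\frac{2\pi}{d+1}$, which exceeds $d-2$ exactly when $d+1<6$; so for $d\le 4$ the induced subgraph alone settles the inequality (recovering $d\in\{2,3\}$), and I would dispose of these small diameters first.

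The main obstacle is the range $d\ge 5$, where the induced copy of $\overline{P_{d+1}}$ falls short of $d-2$ by roughly $1$ and the deficit must be recovered from $\Delta$, that is, from the outside vertices. The naive Weyl bound $\lambda_{d-1}(B)\ge\mu_{d-1}(\overline{P_{d+1}})+\min_i\Delta_{ii}$ is too weak: it already fails to cross $d-2$ when some $v_i$ has no outside neighbour, and one checks that the purely local estimate genuinely breaks at $d=5$. The plan is therefore to work with an enlarged principal submatrix of $L(\overline{G})$ that also includes suitably chosen outside vertices, exploiting the uniform bound established above that every outside vertex is joined in $\overline{G}$ to all but at most three of the $v_i$, so that it behaves almost like an apex over $\overline{P_{d+1}}$. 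Concretely I would estimate $\mu_{d-1}(\overline{G})$ through the Courant--Fischer characterization, building an explicit $(d-1)$-dimensional test subspace from the high eigenvectors of $\overline{P_{d+1}}$ together with coordinates on the outside vertices on which the Rayleigh quotient of $L(\overline{G})$ stays strictly above $d-2$; an equivalent route is to bound an equitable quotient of $\overline{G}$ associated with the partition $\{S,\,V\setminus S\}$ refined by adjacency pattern to $S$. I expect the crux to be showing that this gain is always strictly positive and quantifying it sharply enough to clear $d-2$ for every $d\ge 5$.

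Finally, for the characterization of equality I would trace the inequalities backwards: $m_G[n-d+2,n]=n-d$ forces $\mu_{d-1}(\overline{G})>d-2\ge\mu_d(\overline{G})$, so the interlacing and Rayleigh--quotient steps must be essentially tight. This rigidity should pin down the adjacency pattern between $V\setminus S$ and $S$ together with the degrees along the diametral path, and I would then match the resulting graphs against the expected extremal family (a clique with a pendant path, along with any sporadic small-diameter exceptions) to complete the classification.
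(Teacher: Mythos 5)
Your complement reformulation is correct and clean: $m_G[n-d+2,n]\le n-d$ is indeed equivalent to $\mu_{d-1}(\overline{G})>d-2$, and the computation $\mu_{d-1}(\overline{P_{d+1}})=d-1-2\cos\frac{2\pi}{d+1}$ together with interlacing does settle $d\in\{2,3,4\}$. But for every $d\ge 5$ --- which is the entire substance of the conjecture, since $d\in\{2,3\}$ was already verified in the source paper --- your argument stops at a plan. You correctly diagnose that the induced $\overline{P_{d+1}}$ falls short of $d-2$ by up to $1$ (asymptotically $2\cos\frac{2\pi}{d+1}-1\to 1$) and that this deficit must be recovered from the outside vertices, but the recovery step is stated only as an expectation (``I expect the crux to be showing that this gain is always strictly positive''). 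No test subspace is exhibited, no quotient matrix is analyzed, and no mechanism is given that rules out near-equality configurations. This is a genuine gap, not a routine verification: the extremal graphs $G_{n,d,t}$ and $G_{n,d,r,a}$ of the paper attain $\mu_{n-d}(G)=n-d+2$ exactly, i.e.\ $\mu_d(\overline{G})=d-2$ exactly, so the eigenvalue you are trying to push above $d-2$ sits right at the threshold one index lower; any soft perturbative gain must be calibrated against these tight examples, and your proposal contains no such calibration.

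The paper's proof avoids this difficulty by working in $G$ rather than $\overline{G}$ and by exploiting the combinatorics of the set $V_1$ of path-vertices having neighbours off the diametral path $P$. When no two vertices of $P$ are adjacent to all outside vertices, a two-index Weyl bound on the principal submatrix $L(P)+M$ already gives $\mu_{n-d+1}(G)<4+(n-d-2)$; otherwise $|V_1|\in\{2,3,4\}$, and $G$ is shown to be a spanning subgraph of one of the explicit graphs $G_{n,d,\ell}$ or $G_{n,d,t,a}$, whose relevant eigenvalue is bounded by interlacing against a path-plus-one-vertex induced subgraph $H$ with $\mu_3(H)<4$. The edge-deletion interlacing (Lemma~\ref{cauchy}) then transfers the bound to $G$. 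If you want to salvage your complement route, you would need an analogue of this structural case analysis on the adjacency pattern between $V\setminus S$ and $S$; without it, the claim for $d\ge 5$ is unproven. (Separately, your guess that the extremal family is ``a clique with a pendant path'' is not what the equality case turns out to be, but that concerns Theorem~\ref{x} rather than the conjecture itself.)
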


In this paper, we show that the Conjecture \ref{con} is true.
Moreover, we give a complete characterization of graphs for which the conjectured bound is attained. The main result is as follows.

\begin{theorem}\label{x}
Let  $G$ be a connected graph of order $n$ with diameter $d\ge 2$. If $G\ne P_{d+1}$,  then
$m_G[n-d+2,n]\le n-d$ with equality if and only if\\
\begin{enumerate}
\item[(i)]
 $G\cong G_{n,d,t}$ for some $t$ with $2\le t\le \lfloor\frac{d}{2}\rfloor+1$, or

\item[(ii)]
 $G\cong G_{n,d,r,a}$ for some $r,a$ with $2\le r\le \lfloor\frac{d+1}{2}\rfloor$ and $1\le a\le n-d-2$.
\end{enumerate}
\end{theorem}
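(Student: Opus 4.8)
The plan is to prove the inequality by exhibiting a $d$-dimensional subspace on which the Laplacian quadratic form is strictly below the threshold $n-d+2$, and then to extract the equality cases from the analysis of when this subspace is \emph{exactly} tight. Throughout I may assume $n\ge d+2$, since a connected graph of order $d+1$ with diameter $d$ is forced to be $P_{d+1}$, which is excluded. Fix a diametral (shortest) path $v_0v_1\cdots v_d$; being geodesic it is induced, and the crucial combinatorial fact it supplies is that every vertex outside the path is adjacent in $G$ to at most three consecutive path vertices (otherwise two path vertices would have a common neighbour, hence distance $\le 2$, contradicting that the path is shortest).

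First I would translate the eigenvalue count into a variational statement: since $n-d+2>0$, the bound $m_G[n-d+2,n]\le n-d$ is equivalent to $m_G[0,n-d+2)\ge d$, and by Courant--Fischer it suffices to produce a $d$-dimensional subspace $W$ with $x^{\top}L(G)x<(n-d+2)\lVert x\rVert^2$ for all nonzero $x\in W$. The natural choice is $W=\{x:\ x\text{ is supported on }\{v_0,\dots,v_d\}\text{ with }\sum_j x_{v_j}=0\}$, which has dimension $d$. On $W$ the quadratic form reduces to $y^{\top}My$, where $M=L(G)[\{v_0,\dots,v_d\}]$ is the principal submatrix of $L(G)$ indexed by the path and $y$ ranges over the zero-sum vectors of $\mathbb{R}^{d+1}$. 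Thus the whole inequality reduces to the compact spectral statement $\lambda_{\max}\!\big(M|_{\mathbf 1^{\perp}}\big)<n-d+2$, a bound on a single $(d+1)\times(d+1)$ matrix.

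Writing $M=L(P_{d+1})+\operatorname{diag}(b_0,\dots,b_d)$, where $b_j$ is the number of neighbours of $v_j$ outside the path, the two summands are separately controlled: the path Laplacian contributes at most $\mu_1(P_{d+1})<4$ on the zero-sum space, while $\max_j b_j\le n-d-1$. Combined crudely these give only $\lambda_{\max}(M|_{\mathbf 1^{\perp}})<n-d+3$, one unit short of the target. Closing this gap is the heart of the matter and the step I expect to be the main obstacle: one must show that the two extremes cannot be achieved by the same zero-sum vector, using that a large $b_j$ forces the external neighbours of $v_j$ to cluster on at most three consecutive path positions, which in turn constrains the path-Laplacian contribution of any near-optimal $y$. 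I would carry this out by a careful Rayleigh-quotient estimate that plays the diagonal term $b_j y_{v_j}^2$ against the edge terms $(y_{v_{j-1}}-y_{v_j})^2$ at and around the heavily loaded coordinate.

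For the characterization, equality $m_G[n-d+2,n]=n-d$ forces $m_G[0,n-d+2)=d$ exactly, so the subspace $W$ must capture \emph{all} sub-threshold eigenvalues while every remaining eigenvalue stays at least $n-d+2$. I would read off the extremal configurations by determining precisely when each inequality in the Rayleigh-quotient estimate is tight: tightness pins down both the attachment pattern of the $n-d-1$ external vertices (they must join the path in one of two rigid ways, according to whether they concentrate at a single position or split across a short segment) and the internal adjacencies needed to keep the complementary $n-d$ eigenvalues large. These two regimes yield exactly the families $G_{n,d,t}$ and $G_{n,d,r,a}$ with the stated parameter ranges, and the final task is the routine-but-lengthy verification that each such graph does attain equality, which I would check directly from its structured Laplacian, for instance via the quotient matrix of its natural equitable partition.
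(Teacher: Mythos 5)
Your variational setup is sound as far as it goes, but the quantitative claim on which everything rests --- that the compression of $M=L(G)[V(P)]$ to the zero-sum hyperplane has largest eigenvalue strictly below $n-d+2$ --- is false, so the step you defer as ``the heart of the matter'' is not merely hard but impossible for the subspace you chose. Take $G=G_{n,d,t}$ with $3\le t\le d-1$ and the zero-sum vector $x$ supported on $v_{t-1},v_t,v_{t+1}$ with values $1,-2,1$. Each of the $n-d-1$ vertices outside $P$ is adjacent to all three support vertices and contributes $1+4+1=6$ to $x^{\top}L(G)x$, the four path edges meeting the support contribute $1+9+9+1=20$, and $\lVert x\rVert^{2}=6$, so the Rayleigh quotient equals $n-d-1+\tfrac{20}{6}=n-d+2+\tfrac13>n-d+2$; for $d=2$ (i.e.\ $K_n-e$) the same vector gives exactly $n-d+2$. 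Since these graphs do satisfy $\mu_{n-d+1}(G)<n-d+2$, your $W$ is simply not a minimizing subspace in the min--max characterization, and no refinement of the Rayleigh-quotient estimate on $W$ can close the gap. This also undermines your equality analysis: tightness (indeed failure) of the estimate on $W$ occurs precisely on the extremal graphs, so it does not track equality in the theorem and the two families cannot be read off from it.

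The paper avoids this by never restricting test vectors to the path alone. In the hard case it passes to the principal submatrix $B$ of $L(G)$ indexed by $V(P)$ together with one carefully chosen outside vertex (a path on $d+2$ vertices plus chords), bounds $\rho_3(B)$ by writing $B=L(H)+M$ and applying Weyl's inequality, and then interlaces; in other cases it compares $G$ with explicit extremal supergraphs $G_{n,d,t}$, $G_{n,d,r,a}$, $H_{n,d,t;a,b}$, $H_{n,d,t;a,b,c}$ via edge-deletion interlacing. For the characterization it first verifies that the two families attain the bound (the matrix $(n-d+2)I_n-L$ has many equal rows), then shows any other graph is a spanning subgraph of an extremal graph minus an edge and proves strict inequality for every single-edge deletion, using the equality condition in Weyl's inequality to rule out tightness. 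To salvage a purely variational argument you would at minimum have to let your test space involve the outside vertices, which is in effect what the paper's submatrix construction does.
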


Here, the classes of graphs $G_{n,d,t}$ and $G_{n,d,r,a}$ are defined as follows:
For integers $n$, $d$ and $t$ with $2\le d\le n-2$ and $2\le t\le d$,  $G_{n,d,t}$ denotes the graph  obtained from a path $P_{d+1}:=v_1\dots v_{d+1}$ and a complete graph $K_{n-d-1}$ such that they are vertex disjoint by adding all edges connecting vertices of $K_{n-d-1}$ and vertices $v_{t-1}$, $v_t$ and $v_{t+1}$, see Fig. \ref{F1}.  For integers $n$, $d$, $r$ and $a$ with $3\le d\le n-2$, $2\le r\le d-1$ and $1\le a\le n-d-2$,
%
$G_{n,d,r,a}$ denotes the graph obtained from a path $P_{d+1}:=v_1\dots v_{d+1}$ and a complete graph $K_{n-d-1}$ such that they are vertex disjoint by adding all edges connecting
vertices of $K_{n-d-1}$ and vertices $v_r$ and $v_{r+1}$,
$a$ vertices in $K_{n-d-1}$ and vertex $v_{r-1}$, and the remaining vertices in $K_{n-d-1}$ and  vertex $v_{r+2}$, see Fig. \ref{F2}.

\begin{figure}[htbp]
\centering
\includegraphics [width =8cm]{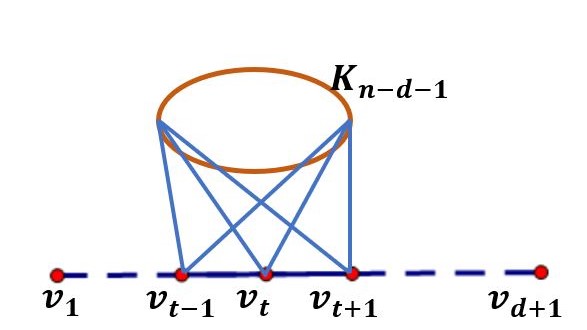}
\caption{The graph $G_{n,d,t}$.}
\label{F1}
\end{figure}

\begin{figure}[htbp]
\centering
\includegraphics [width =8cm]{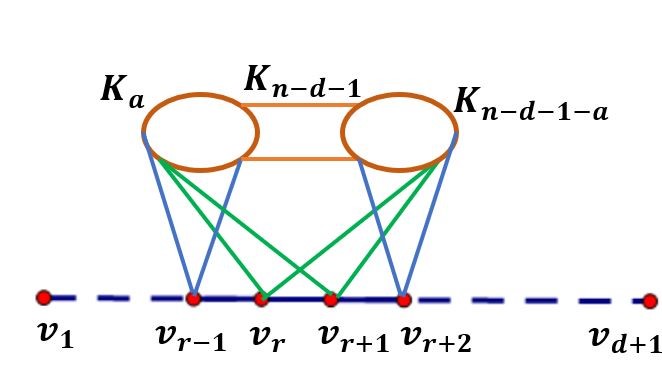}
\caption{The graph $G_{n,d,r,a}$.}
\label{F2}
\end{figure}

The rest of the paper is organized as follows. In section 2,  we introduce some basic notions, definitions, and important lemmas that will be  used. In section 3, we give a proof of Conjecture \ref{con}, and in section 4, we prove Theorem \ref{x}.

\section{Preliminaries}

The follow lemma
states about the $j$th Laplacian eigenvalue of a path, which appeared in \cite[p.~145]{AM} in plain text. In particular, $\mu_1(P_n)<4$.

\begin{lemma} \cite{AM} \label{FD}
$\mu_j(P_n)=4\sin^2\frac{(n-j)\pi}{2n}$ for $j=1,\dots, n$.
\end{lemma}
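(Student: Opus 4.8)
The plan is to exhibit the eigenvectors of $L(P_n)$ explicitly by solving the associated three-term recurrence, and then to verify by a distinctness-plus-counting argument that the $n$ values produced exhaust the spectrum. Label the vertices of $P_n$ as $v_1,\dots,v_n$ along the path, so that $L(P_n)$ is the $n\times n$ symmetric tridiagonal matrix with diagonal $(1,2,2,\dots,2,1)$ and with $-1$ on each off-diagonal. For an eigenpair $(\mu,x)$ with $x=(x_1,\dots,x_n)^{\top}$, the interior rows $2\le i\le n-1$ yield the recurrence $x_{i-1}+x_{i+1}=(2-\mu)x_i$, while the two end rows yield $x_1-x_2=\mu x_1$ and $x_n-x_{n-1}=\mu x_n$.

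First I would fold the two boundary rows into the recurrence by introducing ghost coordinates $x_0$ and $x_{n+1}$. A short computation shows that the left end row is equivalent to demanding that the interior recurrence hold at $i=1$ together with the Neumann-type condition $x_0=x_1$, and symmetrically that the right end row is equivalent to the recurrence at $i=n$ together with $x_{n+1}=x_n$. The whole eigenvalue problem thus becomes: solve $x_{i-1}+x_{i+1}=(2-\mu)x_i$ subject to $x_0=x_1$ and $x_{n+1}=x_n$. Writing $\mu=2-2\cos\theta$ turns the recurrence into $x_{i-1}+x_{i+1}=2\cos\theta\,x_i$, whose general solution for $\theta\neq 0$ is a combination of $\cos(i\theta)$ and $\sin(i\theta)$.

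Next I would impose the two conditions. The left condition $x_0=x_1$ singles out, up to scaling, the solution symmetric about $i=\tfrac12$, namely $x_i=\cos\!\big((i-\tfrac12)\theta\big)$. Imposing the right condition $x_{n+1}=x_n$ then forces $\cos\!\big((n+\tfrac12)\theta\big)=\cos\!\big((n-\tfrac12)\theta\big)$, which collapses to $\sin(n\theta)=0$ and hence $\theta=\frac{k\pi}{n}$ for an integer $k$. The value $k=0$ must be handled on its own, since there $\sin\theta=0$; it gives the constant eigenvector with eigenvalue $0$, as expected for a connected graph. The values $k=1,\dots,n-1$ give genuine eigenvectors, so altogether we obtain the $n$ numbers $\mu=2-2\cos\frac{k\pi}{n}=4\sin^2\frac{k\pi}{2n}$ for $k=0,1,\dots,n-1$, using $1-\cos\alpha=2\sin^2\tfrac{\alpha}{2}$.

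Finally I would close the argument and fix the indexing. Since $\frac{k\pi}{2n}$ lies in $[0,\tfrac{\pi}{2})$ for these $k$ and $\sin^2$ is strictly increasing there, the $n$ values are pairwise distinct; as $L(P_n)$ is a symmetric $n\times n$ matrix it has exactly $n$ eigenvalues counted with multiplicity, so these are precisely all of them and each is simple. Setting $k=n-j$ rewrites the list as $4\sin^2\frac{(n-j)\pi}{2n}$ for $j=1,\dots,n$, and since larger $k$ corresponds to larger eigenvalue, this labeling is compatible with the nonincreasing convention $\mu_1\ge\cdots\ge\mu_n$, which is exactly the claimed formula. The only delicate points are the clean reduction of the two end rows to the ghost-point conditions and the separate treatment of $\theta=0$; the decisive step is the distinctness-plus-counting observation, which is what certifies that we have recovered the full spectrum rather than a subfamily. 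The remark $\mu_1(P_n)<4$ is then immediate, since $\mu_1=4\sin^2\frac{(n-1)\pi}{2n}<4$.
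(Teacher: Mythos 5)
Your proof is correct, but note that the paper does not prove this lemma at all: it is quoted directly from Anderson and Morley \cite{AM}, where the spectrum of $L(P_n)$ appears in plain text, so any comparison is between your self-contained derivation and a bare citation. Your argument is the standard one underlying that reference: fold the two boundary rows into the three-term recurrence via discrete Neumann (``ghost point'') conditions $x_0=x_1$, $x_{n+1}=x_n$, substitute $\mu=2-2\cos\theta$, solve, and then certify completeness of the list $4\sin^2\frac{k\pi}{2n}$, $k=0,\dots,n-1$, by the distinctness-plus-counting observation. That counting step is exactly the right way to close the argument, since it spares you from analyzing $\mu$ outside $[0,4]$ or the degenerate value $\theta=\pi$ (where $\cos(i\theta)$ and $\sin(i\theta)$ no longer span a two-dimensional solution space): once $n$ distinct eigenvalues of an $n\times n$ symmetric matrix are exhibited, nothing else can occur and every eigenvalue is simple. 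Two small points you should make explicit in a write-up: first, the claim that $x_0=x_1$ pins down $x_i=\cos\bigl((i-\tfrac12)\theta\bigr)$ up to scaling uses $\sin\theta\neq 0$, which indeed holds for all $\theta=\frac{k\pi}{n}$ with $1\le k\le n-1$; second, the vectors produced are genuinely nonzero because $x_1=\cos(\theta/2)>0$ for $0<\theta<\pi$. What your route buys is a verifiable, self-contained proof (including simplicity of each eigenvalue, which the citation leaves implicit); what the paper's citation buys is brevity, since downstream only the formula itself and the consequence $\mu_1(P_n)<4$ are ever used.
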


The following lemma follows from \cite[Corollary 2]{GM} and the subsequent comment, see \cite[p.~224]{GM}.

\begin{lemma}\label{d1}\cite{GM}
Let $G$ be a graph  on $n$ vertices with maximum degree $\Delta\ge 1$. Then $\mu_1(G)\ge \Delta+1$ with equality when $G$ is connected if and only if  $\Delta=n-1$.
\end{lemma}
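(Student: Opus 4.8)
The plan is to bound $\mu_1(G)$ from below through the Rayleigh quotient of the Laplacian, using a single well-chosen test vector supported on a maximum-degree vertex and its neighbours. Recall the variational formula
\[
\mu_1(G)=\max_{x\neq 0}\frac{x^{\top}L(G)x}{x^{\top}x}=\max_{x\neq 0}\frac{\sum_{uv\in E(G)}(x_u-x_v)^2}{\sum_{u\in V(G)}x_u^2}.
\]
Together with the already-recorded fact that every Laplacian eigenvalue lies in $[0,n]$ (so $\mu_1(G)\le n$), this is all the spectral input I expect to need.

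Fix a vertex $v$ with $\delta_G(v)=\Delta$, write $N_G(v)=\{u_1,\dots,u_\Delta\}$ and $W=V(G)\setminus(\{v\}\cup N_G(v))$, and define the test vector $x$ by $x_v=\Delta$, $x_{u_i}=-1$ for each $i$, and $x_w=0$ for $w\in W$. Since $\Delta\ge 1$, we have $x\neq 0$. I would then split the edge set of $G$ according to which of the three blocks $\{v\}$, $N_G(v)$, $W$ its endpoints fall in and evaluate each squared difference: the $\Delta$ edges at $v$ each give $(\Delta+1)^2$, edges inside $N_G(v)$ and edges inside $W$ give $0$, edges between $N_G(v)$ and $W$ each give $1$, and there are no edges from $v$ to $W$. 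Writing $c$ for the number of edges joining $N_G(v)$ to $W$, this yields $x^{\top}L(G)x=\Delta(\Delta+1)^2+c$ and $x^{\top}x=\Delta(\Delta+1)$, whence
\[
\mu_1(G)\ge\frac{\Delta(\Delta+1)^2+c}{\Delta(\Delta+1)}=(\Delta+1)+\frac{c}{\Delta(\Delta+1)}\ge \Delta+1,
\]
since $c\ge 0$. This establishes the inequality.

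For the equality characterization assume $G$ is connected. If $\Delta=n-1$ then $\Delta+1=n$, so combining $\mu_1(G)\ge\Delta+1=n$ with $\mu_1(G)\le n$ forces $\mu_1(G)=n=\Delta+1$. Conversely, suppose $\mu_1(G)=\Delta+1$. The displayed Rayleigh estimate shows that if $c\ge 1$ then $\mu_1(G)>\Delta+1$, a contradiction; hence $c=0$, i.e.\ there is no edge between $N_G(v)$ and $W$. Since there is also no edge from $v$ to $W$, the set $\{v\}\cup N_G(v)$ has no edge to $W$ at all, so connectivity of $G$ forces $W=\varnothing$, that is $\delta_G(v)=\Delta=n-1$.

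The argument is essentially routine once the test vector is in hand; the only points demanding care are the bookkeeping in the quadratic form (ensuring every edge type is counted exactly once and that no edge runs directly from $v$ into $W$) and the short connectivity argument converting $c=0$ into $W=\varnothing$. I would also note that $\Delta\ge 1$ is genuinely needed, as it both makes $x$ nonzero and rules out the edgeless graph where the claimed bound fails. An alternative route to the inequality alone is Weyl's inequality applied to the decomposition $L(G)=L(H)+L(G-E(H))$, where $H$ is the spanning subgraph consisting of the star $K_{1,\Delta}$ at $v$ together with isolated vertices so that $\mu_1(H)=\Delta+1$; but the test vector above has the advantage of delivering the equality analysis for free.
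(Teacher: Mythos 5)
Your proof is correct, and it is worth noting that the paper itself offers no proof of this lemma at all: it is imported verbatim from Grone--Merris \cite{GM} (their Corollary~2 and the comment following it on p.~224), where the inequality is obtained from subgraph/interlacing-type arguments rather than from a direct Rayleigh-quotient computation. So your argument is a genuinely different, self-contained route. The bookkeeping checks out: with $x_v=\Delta$, $x_{u_i}=-1$, $x_w=0$ one indeed gets $x^{\top}L(G)x=\Delta(\Delta+1)^2+c$ and $x^{\top}x=\Delta(\Delta+1)$, the hypothesis $\Delta\ge 1$ is exactly what makes the quotient legitimate, and the refinement term $c/(\Delta(\Delta+1))$ is what powers the converse half of the equality statement: equality forces $c=0$, there are no $v$--$W$ edges by definition of $W$, and connectivity then kills $W$, giving $\Delta=n-1$; the other direction follows from $\mu_1(G)\le n$, which the paper records in Section~1. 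What your approach buys is precisely this equality analysis for free, in one computation, whereas the citation-based route needs a separate discussion of the equality case (which is why the paper has to point to the ``subsequent comment'' in \cite{GM} and not just to Corollary~2). Your alternative suggestion---writing $L(G)=L(H)+L(G-E(H))$ with $H$ the spanning star-plus-isolated-vertices subgraph and invoking monotonicity under edge addition---is in fact closer in spirit to the tools the paper does set up (its Lemmas~\ref{cw} and~\ref{cauchy}), but as you say it yields only the inequality, not the characterization of equality, without extra work.
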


%
%

For an $n\times n$ Hermitian matrix $M$, $\rho_i(M)$ denotes its $i$-th largest eigenvalue of $M$ and $\sigma(M)=\{\rho_i(M): i=1,\dots,n \}$ is the spectrum of $M$.
For convenience, if $\rho$ is an eigenvalue of $M$ with multiplicity $s\ge 2$, then we write it as $\rho^{[s]}$ in $\sigma(M)$. For a graph $G$, let $\sigma_L(G)=\sigma(L(G))$.
Denote by $I_n$ the identity matrix of order $n$ and
$\mbox{diag}(m_1,\dots, m_n)$ the $n\times n$ diagonal matrix  with $(i,i)$-entry to be $m_i$ for $i=1,\dots, n$.

We need Weyl's inequalities \cite{We,KT}
with a characterization  of the equality cases
\cite[Theorem 1.3]{So}
(see also \cite[Theorem 4.3.1]{HJ}, where the eigenvalues are
considered nondecreasingly, while in this paper we consider them nonincreasingly).

\begin{lemma} \cite[Theorem 1.3]{So} \label{cw}
Let $A$ and $B$ be Hermitian matrices of order $n$.
For $1\le i,j\le n$ with $i+j-1\le n$,
\[
\rho_{i+j-1}(A+B)\le \rho_i(A)+\rho_j(B).
 \]
with equality if and only if there exists a nonzero vector $\mathbf{x}$ such that $\rho_{i+j-1}(A+B)=(A+B)\mathbf{x}$, $\rho_i(A)\mathbf{x}=A\mathbf{x}$ and $\rho_j(B)\mathbf{x}=B\mathbf{x}$.

\end{lemma}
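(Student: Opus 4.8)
The plan is to reduce the entire statement to the spectral analysis of a single $(d+1)\times(d+1)$ matrix attached to a geodesic, and then to recover the extremal graphs from the equality cases of the tools involved. Fix a shortest path $P:=v_1\dots v_{d+1}$ realizing the diameter and set $W=V(G)\setminus V(P)$, so $|W|=n-d-1$. Two combinatorial consequences of $P$ being a geodesic drive everything: $P$ is \emph{chordless} (a chord $v_iv_j$ with $|i-j|\ge 2$ would shorten the distance $d(v_i,v_j)=|i-j|$), and for every $w\in W$ the neighbours of $w$ on $P$ lie in a window of three consecutive vertices $\{v_i,v_{i+1},v_{i+2}\}$ (otherwise $w$ creates a shortcut). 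Hence, if $M$ denotes the principal submatrix of $L(G)$ indexed by $V(P)$, then $M=L(P_{d+1})+D$, where $D=\mathrm{diag}(c_1,\dots,c_{d+1})$ with $c_i=|N_G(v_i)\cap W|$; moreover $c_i\le n-d-1$ for every $i$ and $\sum_i c_i\le 3(n-d-1)$.

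By Cauchy interlacing for principal submatrices applied to $M$ (of order $d+1$), one gets $\mu_{n-d+1}(G)\le\rho_2(M)$. Since $m_G[n-d+2,n]\le n-d$ is equivalent to $\mu_{n-d+1}(G)<n-d+2$, the bound of Conjecture~\ref{con} follows once I show that $M$ has at most one eigenvalue $\ge n-d+2$, i.e.\ $\rho_2(M)<n-d+2$. I would split on $\rho_2(D)$, the second largest diagonal entry. If $\rho_2(D)\le n-d-2$, this is immediate: deleting the coordinate carrying the largest $c_i$ leaves a $d$-dimensional subspace on which the Rayleigh quotient of $M$ is at most $\rho_1(L(P_{d+1}))+\rho_2(D)<4+(n-d-2)=n-d+2$, using $\mu_1(P_{d+1})<4$ from Lemma~\ref{FD}; so $\rho_2(M)<n-d+2$ by the min-max characterization.

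The hard part is the remaining case $\rho_2(D)=n-d-1$, where at least two path vertices are joined to \emph{all} of $W$; by the window-three property these lie in one block $\{v_a,v_{a+1},v_{a+2}\}$. Here a crude application of Weyl's inequality (Lemma~\ref{cw}) only yields $\rho_2(M)<n-d+3$, and the missing unit must be extracted by analysing the $2\times2$ or $3\times3$ heavy block of $M$ together with its coupling to the light portion of the path, choosing a codimension-one test subspace \emph{adapted to the block} rather than a coordinate subspace. I expect this block analysis, and the bookkeeping showing the threshold is never reached once $G\ne P_{d+1}$, to be the main technical obstacle, with the small-clique boundary cases ($|W|\in\{1,2\}$, i.e.\ $d$ close to $n$) needing separate verification. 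This is precisely the configuration that generates the extremal families: a clique $K_{n-d-1}$ fully joined to three consecutive path vertices is $G_{n,d,t}$, while splitting the join so that the clique meets the two outer vertices of a block through complementary parts is $G_{n,d,r,a}$.

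For the equality characterization I would run the chain of inequalities backwards. Equality $m_G[n-d+2,n]=n-d$ forces $\mu_{n-d}(G)\ge n-d+2>\mu_{n-d+1}(G)$, tightness of Cauchy interlacing, and the equality condition in Weyl's inequality (Lemma~\ref{cw}), which requires a common eigenvector of $L(P_{d+1})$ and $D$. Tracking these conditions pins down $D$, hence the attachment pattern of $W$ to $P$, and forces $G[W]$ to be complete via Lemma~\ref{d1} applied to the dense part; the admissible ranges $2\le t\le\lfloor\frac{d}{2}\rfloor+1$ and $2\le r\le\lfloor\frac{d+1}{2}\rfloor$ should emerge from the symmetry of the path eigenvector in Lemma~\ref{FD}, which localizes the heavy block near the centre. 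Finally I would confirm that $G_{n,d,t}$ and $G_{n,d,r,a}$ indeed attain the bound by exhibiting $n-d$ Laplacian eigenvalues in $[n-d+2,n]$ explicitly, coming from the eigenvalue $n$ contributed by the join and the clique eigenvalues pushed above $n-d+2$.
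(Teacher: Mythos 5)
There is a fundamental mismatch here: the statement you were asked to prove is Lemma~\ref{cw}, which is a purely matrix-analytic fact about two arbitrary Hermitian matrices $A$ and $B$ --- Weyl's inequality $\rho_{i+j-1}(A+B)\le\rho_i(A)+\rho_j(B)$ together with the characterization of equality via a common eigenvector (this is So's Theorem~1.3, which the paper cites without proof). Your proposal never addresses this. Instead, you sketch a proof of the paper's \emph{main theorem} (the bound $m_G[n-d+2,n]\le n-d$ for connected graphs of diameter $d$ and its equality cases), i.e.\ Theorem~\ref{conx}/Theorem~\ref{x}. Worse, your sketch explicitly \emph{invokes} Lemma~\ref{cw} as a tool (``a crude application of Weyl's inequality (Lemma~\ref{cw})\dots'', ``the equality condition in Weyl's inequality (Lemma~\ref{cw})\dots''), so relative to the assigned task the argument is circular: you cannot establish Weyl's inequality by appealing to Weyl's inequality.

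A correct proof of the actual statement would have nothing to do with graphs, diameters, or geodesics. The inequality follows from a dimension-counting argument: if $U$ is spanned by eigenvectors of $A$ for $\rho_i(A),\dots,\rho_n(A)$, $V$ by eigenvectors of $B$ for $\rho_j(B),\dots,\rho_n(B)$, and $W$ by eigenvectors of $A+B$ for $\rho_1(A+B),\dots,\rho_{i+j-1}(A+B)$, then $\dim U+\dim V+\dim W=2n+1>2n$ forces a common nonzero vector $\mathbf{x}$, and comparing Rayleigh quotients gives
\[
\rho_{i+j-1}(A+B)\le\frac{\mathbf{x}^*(A+B)\mathbf{x}}{\mathbf{x}^*\mathbf{x}}
=\frac{\mathbf{x}^*A\mathbf{x}}{\mathbf{x}^*\mathbf{x}}+\frac{\mathbf{x}^*B\mathbf{x}}{\mathbf{x}^*\mathbf{x}}
\le\rho_i(A)+\rho_j(B).
\]
The delicate part, and the actual content of So's theorem, is showing that equality holds if and only if some nonzero $\mathbf{x}$ is \emph{simultaneously} an eigenvector of $A$ for $\rho_i(A)$, of $B$ for $\rho_j(B)$, and of $A+B$ for $\rho_{i+j-1}(A+B)$; the ``if'' direction is immediate, while the ``only if'' direction requires tracking when both Rayleigh-quotient inequalities above are tight on the intersection subspace. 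None of this appears in your proposal, so it does not constitute a proof (or even a partial proof) of the stated lemma.
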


We also need the following two types of interlacing theorem or inclusion principle.

\begin{lemma}\label{interlacing}\cite[Theorem 4.3.28]{HJ}
If $M$ is  a Hermitian matrix of order $n$ and $B$ is its principal submatrix of order $p$, then $\rho_{n-p+i}(M)\le\rho_i(B)\le \rho_{i}(M)$ for $i=1,\dots,p$.
\end{lemma}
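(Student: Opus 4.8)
The plan is to establish this inclusion principle through the Courant--Fischer variational characterization of the eigenvalues of a Hermitian matrix, which is the natural tool here (Weyl's inequalities in Lemma~\ref{cw} only yield coarser, rank-based estimates when a submatrix is viewed as a low-rank perturbation of a padded matrix, and so do not recover the sharp single-step interlacing). \emph{First} I would reduce to the case where $B$ is the leading principal submatrix occupying rows and columns $1,\dots,p$: any principal submatrix is specified by a subset of indices, and a permutation similarity $M\mapsto P^*MP$ with $P$ a permutation matrix preserves the spectrum of $M$, keeps $M$ Hermitian, and merely relabels indices, so this reduction costs nothing. Writing $E$ for the $n\times p$ matrix whose columns are the standard basis vectors $e_1,\dots,e_p$, we then have $B=E^*ME$, the columns of $E$ are orthonormal, and for every nonzero $y\in\mathbb{C}^p$ the vector $Ey\in\mathbb{C}^n$ satisfies
\[
y^*By=(Ey)^*M(Ey),\qquad (Ey)^*(Ey)=y^*y,
\]
so the Rayleigh quotient of $B$ at $y$ equals the Rayleigh quotient of $M$ at $Ey$, and $E$ carries each $k$-dimensional subspace of $\mathbb{C}^p$ to a $k$-dimensional subspace of the coordinate subspace $W=\operatorname{span}\{e_1,\dots,e_p\}\subseteq\mathbb{C}^n$.

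\emph{Second} I would record the two dual forms of the Courant--Fischer theorem: for a Hermitian matrix $A$ of order $m$ with eigenvalues $\rho_1(A)\ge\cdots\ge\rho_m(A)$,
\[
\rho_k(A)=\max_{\substack{S\subseteq\mathbb{C}^m\\ \dim S=k}}\ \min_{0\neq x\in S}\frac{x^*Ax}{x^*x}
=\min_{\substack{S\subseteq\mathbb{C}^m\\ \dim S=m-k+1}}\ \max_{0\neq x\in S}\frac{x^*Ax}{x^*x}.
\]
Since the excerpt does not list Courant--Fischer explicitly, I would either cite it or derive it from the spectral theorem by the standard dimension-counting argument against the eigenspaces of $A$; that derivation is routine and I would not spell it out.

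\emph{Third} comes the interlacing itself, obtained by comparing the \emph{same} Rayleigh quotient over the full space and over $W$. For the upper bound I use the max-min form: as $T$ ranges over $i$-dimensional subspaces of $\mathbb{C}^p$, the Rayleigh-quotient identity gives $\min_{0\neq y\in T}\frac{y^*By}{y^*y}=\min_{0\neq x\in ET}\frac{x^*Mx}{x^*x}$, and maximizing the left side produces $\rho_i(B)$ while the right side sweeps only the $i$-dimensional subspaces contained in $W$; since $\rho_i(M)$ maximizes over \emph{all} $i$-dimensional subspaces of $\mathbb{C}^n$, restricting to a subfamily can only decrease the value, so $\rho_i(B)\le\rho_i(M)$. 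For the lower bound I use the min-max form: $\rho_i(B)$ is the minimum over $(p-i+1)$-dimensional $U\subseteq\mathbb{C}^p$ of $\max_{0\neq y\in U}\frac{y^*By}{y^*y}$, and because $p-i+1=n-(n-p+i)+1$, the number $\rho_{n-p+i}(M)$ is precisely the minimum of the same max-Rayleigh quotient over \emph{all} $(p-i+1)$-dimensional subspaces of $\mathbb{C}^n$; enlarging the family over which we minimize can only decrease the value, whence $\rho_{n-p+i}(M)\le\rho_i(B)$. The one point demanding care is the index bookkeeping in the min-max form --- matching $m-k+1$ with $m=p,\,k=i$ on the $B$ side against $m=n$ on the $M$ side, which forces the shift $i\mapsto n-p+i$ --- but this is arithmetic rather than a genuine obstacle, and the argument is otherwise a direct consequence of the variational principle.
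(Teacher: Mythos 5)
Your proof is correct: the reduction to a leading principal submatrix, the identity $B=E^*ME$ with $E$ an isometry onto the coordinate subspace, and the two dual Courant--Fischer comparisons (max--min for $\rho_i(B)\le\rho_i(M)$, min--max with the index shift $n-(n-p+i)+1=p-i+1$ for $\rho_{n-p+i}(M)\le\rho_i(B)$) are all sound. Note that the paper does not prove this lemma at all --- it imports it by citation from Horn and Johnson \cite[Theorem 4.3.28]{HJ} --- and your argument is essentially the standard proof given in that reference, so there is nothing to reconcile.
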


\begin{lemma}\label{cauchy} \cite[Theorem 3.2]{Moh}
If $G$ is a graph with $e\in E(G)$, then
\[
\mu_1(G)\ge\mu_1(G-e)\ge \mu_2(G)\ge\dots\ge \mu_{n-1}(G-e)\ge \mu_n(G)=\mu_{n}(G-e)=0.
\]
\end{lemma}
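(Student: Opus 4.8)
The plan is to realize the passage from $G-e$ to $G$ as a rank-one positive semidefinite perturbation of the Laplacian, and then read off the interlacing from Weyl's inequalities (Lemma~\ref{cw}). Writing $e=uv$ and letting $\mathbf{e}_w$ denote the standard basis vector indexed by a vertex $w$, the first observation is the identity
\[
L(G)=L(G-e)+(\mathbf{e}_u-\mathbf{e}_v)(\mathbf{e}_u-\mathbf{e}_v)^{\top},
\]
since adding back the edge $uv$ raises the two diagonal entries $\ell_{uu},\ell_{vv}$ by $1$ and lowers the two off-diagonal entries $\ell_{uv},\ell_{vu}$ by $1$, leaving all other entries unchanged. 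The matrix $B:=(\mathbf{e}_u-\mathbf{e}_v)(\mathbf{e}_u-\mathbf{e}_v)^{\top}$ is symmetric, positive semidefinite, and of rank one; its spectrum is $\{2,0^{[n-1]}\}$, so that $\rho_1(B)=2$ and $\rho_2(B)=\dots=\rho_n(B)=0$.

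With $A:=L(G-e)$ we have $A+B=L(G)$, and I would extract the two interleaving families of inequalities by applying Lemma~\ref{cw} with two choices of $j$. Taking $j=2$ gives, for each admissible $i$,
\[
\mu_{i+1}(G)=\rho_{i+1}(A+B)\le \rho_i(A)+\rho_2(B)=\mu_i(G-e),
\]
which furnishes the inequalities $\mu_i(G-e)\ge\mu_{i+1}(G)$ for $i=1,\dots,n-1$. For the reverse direction I would apply Lemma~\ref{cw} to the decomposition $L(G-e)=L(G)+(-B)$; since $-B$ has spectrum $\{0^{[n-1]},-2\}$, so that $\rho_1(-B)=0$, taking $j=1$ yields
\[
\mu_i(G-e)=\rho_i\bigl(L(G)+(-B)\bigr)\le \rho_i(L(G))+\rho_1(-B)=\mu_i(G),
\]
giving $\mu_i(G)\ge\mu_i(G-e)$ for $i=1,\dots,n$. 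Interleaving these two chains produces the full string $\mu_1(G)\ge\mu_1(G-e)\ge\mu_2(G)\ge\dots\ge\mu_{n-1}(G-e)\ge\mu_n(G)$.

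It remains to pin down the two right-hand endpoints. Because every Laplacian annihilates the all-ones vector $\mathbf{1}$ (its row sums vanish) and is positive semidefinite, one has $\mu_n(H)=0$ for any graph $H$; applying this to both $H=G$ and $H=G-e$ gives $\mu_n(G)=\mu_n(G-e)=0$, closing the chain. I do not anticipate a genuine obstacle here: the whole content is the rank-one structure of $L(G)-L(G-e)$ together with correct index bookkeeping in Weyl's inequalities, and the only point demanding care is matching the index shift (the $j=2$ versus $j=1$ choice) to the two alternating inequalities.
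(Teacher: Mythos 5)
Your proposal is correct: the identity $L(G)=L(G-e)+(\mathbf{e}_u-\mathbf{e}_v)(\mathbf{e}_u-\mathbf{e}_v)^{\top}$ holds, the rank-one perturbation has spectrum $\{2,0^{[n-1]}\}$, and the two applications of Lemma~\ref{cw} (with $j=2$ for one chain of inequalities and $j=1$, applied to $L(G-e)=L(G)+(-B)$, for the other) have the index ranges checked properly, so the interlacing string follows; the endpoint claim $\mu_n(G)=\mu_n(G-e)=0$ via positive semidefiniteness and $L\mathbf{1}=\mathbf{0}$ is also fine. Note, however, that the paper does not prove this lemma at all: it is quoted as \cite[Theorem 3.2]{Moh} and used as a black box. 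So your argument is not a variant of the paper's proof but a self-contained derivation, and its value is exactly that self-containedness: it deduces the edge-version of interlacing from Weyl's inequality, which the paper already states as Lemma~\ref{cw}, rather than importing a second external result. This is essentially the standard proof of Laplacian edge-interlacing (the same rank-one decomposition underlies Mohar's argument), so there is no mathematical daylight between your route and the classical one --- only the organizational difference that you rebuild the cited result from the paper's own toolkit.
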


If $\overline{G}$ denotes the complement of a graph $G$ of order $n\ge 2$, then \cite[Theorem 3.6]{Moh} $\mu_i(G)+\mu_{n-i}(\overline{G})=n$ for $i=1,\dots, n-1$.
So  $\sigma_L(K_n-e)=\{n^{[n-2]},n-2,0 \}$ for $n\ge 2$.


\begin{lemma}\label{gndt} For integers $n$, $d$ and $t$ with $2\le d\le n-2$ and $2\le t\le d$,
$m_{G_{n,d,t}}[n-d+2,n]= n-d$ and $\mu_{n-d}( G_{n,d,t})=n-d+2$.
\end{lemma}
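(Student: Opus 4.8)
The plan is to write $G := G_{n,d,t}$, set $\lambda := n-d+2$, and prove both assertions by establishing three facts: (i) $\mu_{n-d}(G)\ge \lambda$; (ii) $\mu_{n-d+1}(G)<\lambda$; and (iii) $\lambda\in\sigma_L(G)$. Since every Laplacian eigenvalue lies in $[0,n]$, facts (i) and (ii) already give $m_G[\lambda,n]=n-d$. Moreover they pin down $\mu_{n-d}(G)=\lambda$: letting $e$ be the largest index with $\mu_e(G)=\lambda$, fact (ii) forces $e\le n-d$, while if $e<n-d$ then $\mu_{n-d}(G)\le\mu_{e+1}(G)<\lambda$, contradicting (i); hence $e=n-d$. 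The structural picture I would use throughout is that $G$ is $K_N-e$ with $N:=n-d+2$ (the missing edge being $v_{t-1}v_{t+1}$), to which two pendant paths $v_{t-1}v_{t-2}\cdots v_1$ and $v_{t+1}v_{t+2}\cdots v_{d+1}$ are attached at the two ends of the missing edge; here $K_N-e$ is induced on $\{v_{t-1},v_t,v_{t+1}\}$ together with the clique $K_{n-d-1}$.

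Fact (iii) is immediate by exhibiting an eigenvector. With $m:=n-d-1$ the clique order, set $x_{v_t}=m$, $x_c=-1$ for every clique vertex $c$, and $0$ on all remaining vertices; since $v_t$ and every clique vertex have degree $m+2=\lambda-1$, a direct check of the eigenvalue equation at $v_t$, at each clique vertex, and at $v_{t-1},v_{t+1}$ gives $L(G)\mathbf{x}=\lambda\mathbf{x}$. For fact (i) I would delete from $G$ the two edges joining the pendant paths to the core, obtaining $G_0=(K_N-e)\cup P_{t-2}\cup P_{d-t}$ on the same vertex set. Since $\sigma_L(K_N-e)=\{N^{[N-2]},N-2,0\}$ and, by Lemma \ref{FD}, every path Laplacian eigenvalue is $<4\le N$, the $n-d=N-2$ largest eigenvalues of $G_0$ all equal $\lambda$, so $\mu_{n-d}(G_0)=\lambda$; applying Lemma \ref{cauchy} once per deleted edge yields $\mu_{n-d}(G)\ge\mu_{n-d}(G_0)=\lambda$.

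The heart of the matter is fact (ii). Here I would apply Lemma \ref{interlacing} to the principal submatrix $P$ of $L(G)$ obtained by deleting the rows/columns of $v_t$ and of all $m$ clique vertices, leaving a $d\times d$ matrix. Because a surviving path vertex keeps its full $G$-degree on the diagonal while losing every entry to a deleted neighbour, $P$ is block diagonal, $P=M_L\oplus M_R$, where $M_L,M_R$ are the Jacobi matrices of the segments $v_1\cdots v_{t-1}$ and $v_{t+1}\cdots v_{d+1}$; each is $L$ of a path except that the endpoint adjacent to the deleted $v_t$ has its diagonal raised by $m+1$, i.e. $M_L=L(P_{t-1})+(m+1)E_{kk}$ (with $k=t-1$ and $E_{kk}$ the matrix unit), and similarly for $M_R$. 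Lemma \ref{interlacing} gives $\mu_{n-d+1}(G)=\rho_{n-d+1}(L(G))\le\rho_1(P)=\max\{\rho_1(M_L),\rho_1(M_R)\}$, so (ii) reduces to the single inequality $\rho_1\big(L(P_k)+(m+1)E_{kk}\big)<m+3$ for every $k\ge 1$.

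Proving this last inequality is the main obstacle, because Gershgorin or a rank-one Weyl estimate only yield $\rho_1\le m+3$, not the needed strictness. I would instead argue through the eigenvector: suppose $\theta\ge m+3$ were an eigenvalue with eigenvector $(u_1,\dots,u_k)$. The equation at the first vertex forces $u_1\ne 0$ (else the tridiagonal recurrence makes all $u_i=0$), so the ratios $r_i:=u_{i+1}/u_i$ are defined, with $r_1=1-\theta\le-(m+2)$ and $r_i=(2-\theta)-1/r_{i-1}$; using $2-\theta\le-(m+1)$, an immediate induction gives $r_i<-1$ for all $i$. But the equation at the boosted endpoint reads $1/r_{k-1}=m+2-\theta$, and $r_{k-1}<-1$ forces $|m+2-\theta|<1$, hence $\theta<m+3$, a contradiction. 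This yields $\rho_1<m+3=\lambda$ and therefore (ii). Finally I would check that the boundary cases $t\in\{2,d\}$ (one pendant path absent, hence one fewer deleted edge in (i) and a singleton block in $P$) and $d=2$ (where $G=K_n-e$ and the claim follows from its known spectrum) are covered verbatim, completing the proof that $m_{G_{n,d,t}}[n-d+2,n]=n-d$ and $\mu_{n-d}(G_{n,d,t})=n-d+2$.
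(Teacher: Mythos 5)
Your proof is correct, but it takes a genuinely different route from the paper's. For the upper bound $\mu_{n-d+1}(G)<n-d+2$, the paper keeps the $d+2$ vertices $V(P)\cup\{v\}$ for a single clique vertex $v$, writes the induced subgraph's Laplacian as $L(P_{d+2})+S$ with $\rho_3(S)=0$, gets $\mu_3(H)\le\mu_1(P_{d+2})<4$ from Weyl's inequality, and then adds the diagonal correction $\rho_1(M)=n-d-2$; strictness comes for free from $\mu_1(P_{d+2})<4$. You instead delete $v_t$ and the entire clique, observe that the remaining $d\times d$ principal submatrix splits as $M_L\oplus M_R$ with each block a path Laplacian whose boosted endpoint entry is $m+2$, and prove $\rho_1<m+3$ by a transfer-matrix argument on the ratios $r_i=u_{i+1}/u_i$ of a hypothetical eigenvector. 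You correctly identify that the strict inequality is the crux here (Weyl or Gershgorin only give $\le m+3$), and your ratio induction ($r_i<-1$ forces $|m+2-\theta|<1$ at the boosted endpoint) closes it cleanly; I checked the recurrence, the nonvanishing of $u_1$, and the endpoint equation, and they hold, including the singleton blocks when $t\in\{2,d\}$. For the lower bound and the exact value $\mu_{n-d}=n-d+2$, the paper uses $\Delta(G)=n-d+1<n-1$ to get $\mu_1>n-d+2$ via Lemma \ref{d1} together with the observation that $(n-d+2)I_n-L(G)$ has $n-d$ equal rows (multiplicity $\ge n-d-1$), whereas you delete the two bridge edges to obtain $(K_{n-d+2}-e)\cup P_{t-2}\cup P_{d-t}$, read off $\mu_{n-d}=n-d+2$ from the known spectrum of $K_N-e$, apply edge interlacing (Lemma \ref{cauchy}), and exhibit an explicit eigenvector for membership; your eigenvector does satisfy $L(G)\mathbf{x}=(n-d+2)\mathbf{x}$. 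The paper's argument is shorter because Weyl plus $\mu_1(P_{d+2})<4$ does the strictness work in one line; your block-diagonal decomposition is more self-contained and gives finer information (the exact location of the obstruction in the two pendant segments), at the cost of the explicit tridiagonal analysis.
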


\begin{proof}
Let $G=G_{n,d,t}$.
If $d=2$, then $G\cong K_n-e$ for some $e\in E(K_n)$, so the result follows as $\sigma_L(K_n-e)=\{n^{[n-2]},n-2,0\}$.

Suppose that $d\ge 3$. Note that $G$ is obtainable from vertex disjoint  $P_{d+1}:=v_1\dots v_{d+1}$ and  $K_{n-d-1}$  by adding all edges connecting vertices of $K_{n-d-1}$ and vertices $v_{t-1}$, $v_t$ and $v_{t+1}$. Let $v$ be an arbitrary vertex outside $P_{d+1}$.  Let $u_i=v_i$ for $i=1, \dots, t-1$, $u_t=v$ and $u_{i+1}=v_{i}$ for $i=t,\dots d+1$. Let $H=G[\{u_1,\dots,  u_{d+2}\}]$. Evidently,
$H$ is obtainable from the path $u_1\dots u_{d+2}$ by adding edges $u_{t-1}u_{t+1}$ and $u_tu_{t+2}$.
Then
\[
L(H)=L(P_{d+2})+S,
\]
where $S=(s_{ij})_{n\times n}$ with \[
s_{ij}=\begin{cases}
1&\mbox{ if }i=j\in\{t-1,t,t+1,t+2 \},\\
-1&\mbox{ if }\{i,j\}=\{t-1,t+1\},\{t,t+2\},\\
0&\mbox{ otherwise.}
\end{cases}
\]
It is easy to see that $\sigma(S)=\{2^{[2]},0^{[d]} \}$.
By Lemma \ref{cw},
\[
\mu_3(H)\le \mu_1(P_{d+2})+\rho_3(S)=\mu_1(P_{d+2})<4,
\]
Let $B$ be the principal submatrix of $L(G)$ corresponding to vertices of $H$.
Then
$B=L(H)+M$, where $M=\mbox{diag}(m_1,\dots,m_{d+2})$ with \[
m_i=\begin{cases}
n-d-2&\mbox{ if }i=t-1,t,t+1,t+2,\\
0&\mbox{ otherwise.}
\end{cases}
\]
By Lemmas \ref{interlacing} and \ref{cw},
\[
\mu_{n-d+1}(G)=\rho_{n-(d+2)+3}(L(G))\le \rho_3(B)\le \mu_3(H)+\rho_1(M)<4+n-d-2=n-d+2.
\]
So $m_G[n-d+2,n]\le n-d$.

On the other hand, the maximum degree of $G$ is $n-d+1<n-1$, we have by Lemma \ref{d1} that $\mu_1(G)>n-d+2$. As $(n-d+2)I_n-L(G)$ has $n-d$ equal rows, $n-d+2$ is a Laplacian eigenvalue of $G$ with multiplicity at least $n-d-1$. So
 $m_{G}[n-d+2,n]\ge n-d$.

 Now it follows that  $m_{G}[n-d+2,n]=n-d$. As $n-d+2$ is a Laplacian eigenvalue of $G$, we have
 $\mu_{n-d}(G)= n-d+2$.
\end{proof}

\begin{lemma}\label{gndla} For integers $n$, $d$, $t$ and $a$ with $3\le d\le n-2$, $2\le t\le d-1$ and $1\le a\le n-d-2$,
$m_{G_{n,d,t,a}}[n-d+2,n]=n-d$.
\end{lemma}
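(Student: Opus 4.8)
The plan is to follow the two-sided strategy used in the proof of Lemma~\ref{gndt}: bound $m_{G_{n,d,t,a}}[n-d+2,n]$ from above by $n-d$ via an interlacing/Weyl argument on a path-like principal submatrix, and from below by $n-d$ by exhibiting $n-d+2$ as an eigenvalue of large multiplicity together with one strictly larger eigenvalue. Write $G=G_{n,d,t,a}$ and split the $n-d-1$ vertices of $K_{n-d-1}$ into the set $C_1$ of the $a$ vertices joined to $v_{t-1}$ (and to $v_t,v_{t+1}$) and the set $C_2$ of the remaining $n-d-1-a$ vertices joined to $v_{t+2}$ (and to $v_t,v_{t+1}$); note $C_1\neq\emptyset$ and $|C_2|\ge 1$ because $1\le a\le n-d-2$.

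For the upper bound I would pick a vertex $v\in C_1$ and consider $H=G[\{v_1,\dots,v_{d+1},v\}]$. Relabelling by inserting $v$ between $v_{t-1}$ and $v_t$ (set $u_i=v_i$ for $i\le t-1$, $u_t=v$, and $u_{i+1}=v_i$ for $i\ge t$), one checks that $H$ is exactly the path $u_1\cdots u_{d+2}$ together with the two extra edges $u_{t-1}u_{t+1}$ and $u_tu_{t+2}$, so $L(H)=L(P_{d+2})+S$ with $S$ the \emph{same} perturbation as in Lemma~\ref{gndt}, whose spectrum is $\{2^{[2]},0^{[d]}\}$; hence $\mu_3(H)\le\mu_1(P_{d+2})<4$ by Lemma~\ref{cw}. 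Letting $B$ be the principal submatrix of $L(G)$ on the vertices of $H$, we have $B=L(H)+M$, where $M$ is diagonal and records for each $u_i$ its number of neighbours in $K_{n-d-1}\setminus\{v\}$; its entries are $a-1$ at $v_{t-1}$, $n-d-2$ at each of $v_t,v_{t+1},v$, and $n-d-1-a$ at $v_{t+2}$, so $\rho_1(M)=n-d-2$. Lemmas~\ref{interlacing} and \ref{cw} then give
\[
\mu_{n-d+1}(G)=\rho_{n-(d+2)+3}(L(G))\le\rho_3(B)\le\mu_3(H)+\rho_1(M)<4+(n-d-2)=n-d+2,
\]
whence $m_G[n-d+2,n]\le n-d$.

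For the lower bound I would exploit twins. I claim $T_1=\{v_t\}\cup C_1$ and $T_2=\{v_{t+1}\}\cup C_2$ are two sets of mutually adjacent twins: each is a clique, all vertices of $T_1$ share the external neighbourhood $\{v_{t-1},v_{t+1}\}\cup C_2$, all vertices of $T_2$ share $\{v_t,v_{t+2}\}\cup C_1$, and every vertex of $T_1\cup T_2$ has degree $n-d+1$. Hence, for $x,y$ in a common $T_i$, the vector $\mathbf{e}_x-\mathbf{e}_y$ is an eigenvector of $L(G)$ for the eigenvalue $n-d+2$; as $T_1,T_2$ are disjoint these give $(|T_1|-1)+(|T_2|-1)=(a+1-1)+(n-d-a-1)=n-d-1$ linearly independent eigenvectors, so $n-d+2$ has multiplicity at least $n-d-1$ (equivalently, $(n-d+2)I_n-L(G)$ has two blocks of equal rows, of sizes $a+1$ and $n-d-a$). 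Since the maximum degree of $G$ is $n-d+1<n-1$ (as $d\ge3$), Lemma~\ref{d1} gives $\mu_1(G)>n-d+2$, an eigenvalue distinct from the $n-d-1$ copies of $n-d+2$. Therefore $m_G[n-d+2,n]\ge 1+(n-d-1)=n-d$, and combined with the upper bound this yields the equality.

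The routine part is the upper bound, which is essentially identical to Lemma~\ref{gndt} once one checks that the inserted vertex produces the same $S$ and that $\rho_1(M)=n-d-2$. The step needing most care is the lower bound: unlike in $G_{n,d,t}$, where $v_t$ together with the whole clique forms a single twin class of size $n-d$, here the clique is split and one must recognise that $v_t$ merges with $C_1$ and $v_{t+1}$ merges with $C_2$ into two \emph{separate} twin classes whose sizes $a+1$ and $n-d-a$ nonetheless sum to give exactly multiplicity $n-d-1$. Verifying that both classes have common degree $n-d+1$ (so the shared eigenvalue is precisely $n-d+2$) and have disjoint supports is the crux of the argument.
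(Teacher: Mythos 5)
Your proof is correct and follows essentially the same route as the paper: the same principal submatrix $H$ obtained by inserting one neighbour of $v_{t-1}$ into the path, the same diagonal perturbation $M$ with $\rho_1(M)=n-d-2$, and the same lower bound via the multiplicity of $n-d+2$ (your twin-class eigenvectors are exactly the paper's observation that $(n-d+2)I_n-L(G)$ has two blocks of $a+1$ and $n-d-a$ equal rows) together with $\mu_1(G)>n-d+2$ from Lemma~\ref{d1}.
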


\begin{proof}
Let $G'=G_{n,d,t,a}$.
Note that $G'$ is obtainable  from vertex disjoint $P_{d+1}:=v_1\dots v_{d+1}$ and  $K_{n-d-1}$ by adding all edges connecting $a$ vertices in $K_{n-d-1}$ and vertices $v_{t-1}$, $v_t$ and $v_{t+1}$,  and the remaining vertices in $K_{n-d-1}$ and  vertices $v_t$, $v_{t+1}$ and $v_{t+2}$. Let $v$ be a neighbor of $v_{t-1}$ outside $P_{d+1}$. Let $u_i=v_i$ for $i=1, \dots, t-1$, $u_t=v$ and $u_{i+1}=v_{i}$ for $i=t,\dots d+1$. Let $H=G'[\{u_1,\dots,  u_{d+2}\}]$.
As in the proof of previous lemma, one gets $\mu_3(H)<4$.
Let $B$ be the principal submatrix of $L(G')$ corresponding to the vertices of $H$.
Then
$B=L(H)+M$, where $M=\mbox{diag}(m_1,\dots,m_{d+2})$ with \[
m_i=\begin{cases}
n-d-2&\mbox{ if }i=t,t+1,t+2,\\
a-1&\mbox{ if }i=t-1,\\
n-d-a-1&\mbox{ if }i=t+3,\\
0&\mbox{ otherwise.}
\end{cases}
\]
By Lemmas \ref{interlacing} and \ref{cw},
\[
\mu_{n-d+1}(G')\le\rho_3(B)\le \mu_3(H)+\rho_1(M)<4+n-d-2=n-d+2.
\]
So $m_{G'}[n-d+2,n]\le n-d$.

Next we show that $m_{G'}[n-d+2,n]\ge n-d$.
As the maximum degree of $G'$ is $n-d+1$,  less than $n-1$, we have by Lemma \ref{d1} that
 $\mu_1(G')>n-d+2$. As $(n-d+2)I_n-L(G')$ has $a+1$ and $n-d-a$ equal rows, respectively, $n-d+2$ is a Laplacian eigenvalue of $G'$ with multiplicity at least $n-d-1$. So $m_{G'}[n-d+2,n]\ge n-d$, as desired.
\end{proof}



A  diametral path of a connected graph with diameter $d$ is a path of $G$ that joins a pair of vertices with distance $d$. Given a graph $G$, for $v\in V(G)$ and a subgraph $F$ of $G$, let
$N_F(v)=N_G(v)\cap V(F)$.

\section{Proof of Conjecture \ref{con}}

We remark that that if $G$ is a path, then Conjecture \ref{con} is not true unless $n=3,4,5$. The reason is as follows: By Lemma \ref{FD}, 
\[
\mu_j(P_n)\ge 3 \Leftrightarrow 2-2\cos \frac{(n-j)\pi}{n}\ge 3 \Leftrightarrow
\frac{(n-j)\pi}{n}\ge \frac{2\pi}{3}\Leftrightarrow j\le \frac{n}{3}
\]
and hence if $n\ge 6$, then $m_{P_n}[3,n]\ge 2$. To prove Conjecture \ref{con} and overcome  difficulty  is that the assertion is not correct for paths,  we consider the number of vertices on a diametral path $P$ that are adjacent to each vertex outside $P$. The difficult case is when this number is at least two.
In this case, we consider the number of vertices on $P$ that are neighbors of some vertex outside $P$, which must be $2,3,4$ by the diameter condition. 

Now we give an affirmative answer to Conjecture \ref{con}. Though it follows from Theorem
\ref{x}, we give a simpler proof here.

\begin{theorem}\label{conx}
Let  $G$ be a connected graph of order $n$ with diameter $d\ge 2$. If $G\ne P_{d+1}$,  then
\[
m_G[n-d+2,n]\le n-d.
\]
\end{theorem}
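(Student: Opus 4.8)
The plan is to recast the bound $m_G[n-d+2,n]\le n-d$ as the single eigenvalue inequality $\mu_{n-d+1}(G)<n-d+2$, and to prove the latter by exhibiting a well-chosen induced subgraph on $d+2$ vertices whose third-largest Laplacian eigenvalue lies below $4$. Concretely, fix a diametral path $P=v_1\cdots v_{d+1}$ and let $W\subseteq V(G)$ with $|W|=d+2$ consist of $V(P)$ together with one suitably chosen vertex outside $P$. Writing $B$ for the principal submatrix of $L(G)$ indexed by $W$ and $M=B-L(G[W])$ for the diagonal matrix recording, for each vertex of $W$, its number of neighbours outside $W$, Lemma \ref{interlacing} gives $\mu_{n-d+1}(G)=\rho_{n-(d+2)+3}(L(G))\le\rho_3(B)$, and Lemma \ref{cw} gives $\rho_3(B)\le\mu_3(G[W])+\rho_1(M)$. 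Since every vertex has at most $|V(G)\setminus W|=n-d-2$ neighbours outside $W$, we have $\rho_1(M)\le n-d-2$; thus everything reduces to proving the clean statement $\mu_3(G[W])<4$, which yields $\mu_{n-d+1}(G)<4+(n-d-2)=n-d+2$.

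To establish $\mu_3(G[W])<4$ I would reuse the mechanism already deployed in Lemmas \ref{gndt} and \ref{gndla}: find a Hamiltonian path $Q$ of $G[W]$ so that $L(G[W])=L(Q)+S$, where $S$ is the Laplacian of the graph of the remaining (chord) edges; if this chord graph has at most two positive Laplacian eigenvalues, then $\rho_3(S)=0$, and Lemma \ref{cw} together with $\mu_1(P_{d+2})<4$ (Lemma \ref{FD}) gives $\mu_3(G[W])\le\mu_1(Q)+\rho_3(S)=\mu_1(P_{d+2})<4$. The diameter hypothesis is exactly what makes the chords controllable: since $P$ is a shortest path, any vertex outside $P$ can be adjacent only to path vertices whose indices differ by at most $2$, hence to at most three consecutive vertices $v_i,v_{i+1},v_{i+2}$. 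When the chosen vertex $w$ is adjacent to two \emph{consecutive} path vertices $v_i,v_{i+1}$, inserting $w$ between them produces a Hamiltonian path whose chords form a matching of size at most $2$ (the edge $v_iv_{i+1}$, together with at most the edge $wv_{i+2}$), so $\rho_3(S)=0$, exactly as in Lemma \ref{gndt}.

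The combinatorial heart of the argument is the choice of $w$ and the case analysis by the number of neighbours each outside vertex has on $P$. If some outside vertex has at most one neighbour on $P$, I would take $w$ to be such a vertex, so that $G[W]$ is a path with a single short appendage and the chord count stays at most two. The difficult case is when every outside vertex has at least two neighbours on $P$; here I would first argue, from the shortest-path property, that the set $N$ of path vertices having a neighbour outside $P$ satisfies $|N|\in\{2,3,4\}$, and then treat the genuinely awkward configuration in which $w$ is adjacent to $v_i$ and $v_{i+2}$ but not $v_{i+1}$, where no simple insertion yields a Hamiltonian path; in that configuration $v_{i+1}$ must instead be attached as a pendant, trading one chord for a leaf while preserving $\rho_3(S)=0$.

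I expect the main obstacle to be precisely this case analysis rather than the spectral reduction: guaranteeing, uniformly over every attachment pattern permitted by the diameter, that $G[W]$ admits a spanning path (or spanning caterpillar) whose chord graph carries at most two positive Laplacian eigenvalues, and in particular disposing of the \emph{gap} configurations and of the vertices that meet $P$ in fewer than two neighbours. Once $\mu_3(G[W])<4$ is secured in every case, the strictness of $\mu_1(P_{d+2})<4$ furnishes the strict inequality $\mu_{n-d+1}(G)<n-d+2$, and hence the conjectured bound.
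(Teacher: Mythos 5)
Your spectral skeleton is correct and takes a genuinely different route from the paper's. The paper splits according to how many vertices of $P$ are adjacent to \emph{every} vertex outside $P$: when at most one is, it works with the $(d+1)\times(d+1)$ principal submatrix indexed by $V(P)$ and the \emph{second} largest entry of the diagonal perturbation; when at least two are, it shows $G$ is a spanning subgraph of one of the extremal graphs $G_{n,d,t}$ or $G_{n,d,r,a}$ and invokes Lemma \ref{cauchy} together with Lemmas \ref{gndt} and \ref{gndla}. You instead adjoin a single outside vertex $w$ to $P$ and run the $\rho_3$-interlacing argument directly on $G$; this absorbs the paper's first case automatically (a path vertex adjacent to all outside vertices still has at most $n-d-2$ neighbours outside $W$, since $w\in W$) and bypasses the spanning-supergraph reduction. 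If completed, your proof would be shorter and more self-contained than the paper's.

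But it is not complete, and one device you propose is wrong. Everything rests on $\mu_3(G[W])<4$ for every attachment pattern of $w$, and since $P$ is a shortest path there are only four: $N_P(w)$ equals $\{v_i\}$, $\{v_i,v_{i+1}\}$, $\{v_i,v_{i+1},v_{i+2}\}$, or $\{v_i,v_{i+2}\}$ (and $G[V(P)]=P$, so there are no chords among path vertices to worry about). You verify only the consecutive-pair case. For the gap configuration $N_P(w)=\{v_i,v_{i+2}\}$, your proposal --- a spanning caterpillar with $v_{i+1}$ as a pendant --- fails: such a caterpillar has a vertex of degree $3$, so by Lemma \ref{d1} its largest Laplacian eigenvalue exceeds $4$, and the bound $\mu_3(G[W])\le\mu_1(H_0)+\rho_3(S)$ gives nothing. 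The correct move there is to take the linear forest $P_{d+1}\cup K_1$ (with $w$ isolated) as the base and put \emph{both} edges $wv_i,wv_{i+2}$ into $S$: then $S$ is the Laplacian of a star $K_{1,2}$ with spectrum $\{3,1,0^{[d]}\}$, so $\rho_3(S)=0$ and $\mu_3(G[W])\le\mu_1(P_{d+1})<4$. The same linear-forest decomposition handles the one-neighbour case (a single edge in $S$, $\rho_2(S)=0$), and the three-neighbour case is exactly the size-two matching computation from Lemma \ref{gndt}. With these four lines supplied your argument closes; the global analysis you anticipate (the set $N$ with $|N|\in\{2,3,4\}$, the behaviour of outside vertices other than $w$) is unnecessary, because the argument is purely local to the single chosen $w$.
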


\begin{proof} Note that
\[
m_G[n-d+2,n]\le n-d \Leftrightarrow \mu_{n-d+1}(G)<n-d+2.
\]
So, it suffices to show that $\mu_{n-d+1}(G)<n-d+2$.


Let $P:=v_1\dots v_{d+1}$ be a diametral path of $G$. As $G\ne P_{d+1}$, we have $d\le n-2$.
Let $B$ be the principal submatrix  of $L(G)$ corresponding to vertices on $P$. Then $B=L(P)+M$ with
\[
M=\mbox{diag}(\delta_G(v_1)-1, \delta_G(v_2)-2, \dots, \delta_G(v_d)-2, \delta_G(v_{d+1})-1).
\]
Note that $|V(G)\setminus V(P)|=n-d-1$.
If there is at most one vertex in $P$ adjacent to each vertex outside $P$ in $G$, then all entries $M_{ii}$ except one are at most  $n-d-2$, implying that $\rho_2(M)\le n-d-2$, so by Lemma \ref{interlacing} and Lemma \ref{cw}, we get
\begin{align*}
\mu_{n-d-1}(G)&=\rho_{n-(d+1)+2}(L(G))\\
&\le \rho_2(B)\\
&=\rho_2(L(P)+M)\\
&\le \rho_1(L(P))+\rho_2(M)\\
&= \mu_1(P_{d+1})+\rho_2(M)\\
&< 4+n-d-2\\
&=n-d+2,
\end{align*}
i.e., $\mu_{n-d+1}(G)<n-d+2$. 

%

Next, suppose that there are at least two vertices in $P$ adjacent to each vertex outside $P$.
Let $v_t$ and $v_s$ be two such vertices with $1\le t<s\le d+1$ by requiring that $t$ is minimum.
As $P$ is a diametral path of $G$, any vertex outside $P$ is adjacent to at most three consecutive vertices on $P$.
So $s-t\le 2$.
Let $V_1=\cup_{u\in V(G)\setminus V(P)}N_P(u)$. If $s-t=1$ with $t\ge 2$ and $s\le d$, then $\{v_t, v_{t+1}\}\subseteq V_1\subseteq \{v_{t-1}, v_t, v_{t+1}, v_{t+2}\}$.
If $s-t=1$ with $t=1$ or $s=d+1$, then $\{v_1, v_{2}\}\subseteq  V_1\subseteq \{v_1, v_{2}, v_{3}\}$ or
$\{v_d, v_{d+1}\}\subseteq  V_1\subseteq \{v_{d-1}, v_{d}, v_{d+1}\}$.
If  $s-t=2$,  then $\{v_t, v_{t+2}\}\subseteq  V_1\subseteq \{v_t, v_{t+1}, v_{t+2}\}$.
So $|V_1|=2,3, 4$.

Suppose  that $|V_1|\le 3$.
Note that  $\{v_t, v_{t+1}\}\subseteq V_1\subseteq \{v_{t-1}, v_t, v_{t+1}\}$ with $t\ge 2$, or
$\{v_t, v_{t+1}\}\subseteq V_1\subseteq \{v_t, v_{t+1}, v_{t+2}\}$, or
$\{v_t, v_{t+2}\}\subseteq  V_1\subseteq \{v_t, v_{t+1}, v_{t+2}\}$.
Suppose without loss of generality that $V_1\subseteq  \{v_{\ell-1},v_\ell,v_{\ell+1}\}$ with $2\le \ell\le d$.
Then $G$ is a spanning subgraph of the graph $G_{n,d,\ell}$, which is obtained from $G$ by adding all possible edges between any two vertices
outside $P$ and between any vertex outside $P$ and any of $v_{\ell-1}, v_\ell$ and $v_{\ell+1}$.
Thus, by Lemmas \ref{cauchy} and \ref{gndt}, one gets
$\mu_{n-d+1}(G)\le \mu_{n-d+1}(G_{n,d,\ell})<n-d+2$.

Suppose next that $|V_1|=4$. Then $V_1=\{v_{t-1},v_t,v_{t+1},v_{t+2}\}$.
Recall that $v_t,v_{t+1}$ are adjacent to all vertices outside $P$.
Denote by $W_1$ the set of neighbors of $v_{t-1}$ outside $P$. Let $W_2= (V(G)\setminus V(P))\setminus W_1$.
Let $a=|W_1|$ 
and $b=|W_2|$. Then $a+b=n-d-1$. By the definition of $V_1$ and choice of $v_t$, $a, b\ge 1$.
Note that $N_G(v_{t-1})\cap N_G(v_{t+2})=\emptyset$. Thus, $G-W_1$ is a spanning subgraph of $G_{n-a,d,t+1}$ with diameteral path $P$, and $G-W_2$ is a spanning subgraph of $G_{n-b,d,t}$ with diameteral path $P$. It then follows that
and $G$ is a spanning subgraph
$G_{n,d,t,a}$ (with diameteral path $P$).
Now, by Lemmas \ref{cauchy} and \ref{gndla}, we have
$\mu_{n-d+1}(G)\le \mu_{n-d+1}(G_{n,d,t,a})<n-d+2$.

Now we have proved that $\mu_{n-d+1}(G)<n-d+2$ in any case,   so
$m_G[n-d+2,n]\le n-d$.
\end{proof}

\section{Proof of Theorem \ref{x}}

To prove Theorem \ref{x},  we consider the number of vertices on a diametral path $P$ that are adjacent to each vertex outside $P$. The difficult case is when this number is at least one.
In this case, we consider the maximum distance of vertices on $P$ that are neighbors of some vertex outside $P$, which is at most $4$. One main technique is to
apply Weyl's inequality  and interlacing theorems to the corresponding matrices.
 To simplify the proof of Theorem \ref{x}, we establish some lemmas.

\begin{lemma}\label{05}
Let $n, d$ and $t$ be integers with $3\le t\le d-2$ and $d\le n-3$.
For positive integers $a,b$ with $a+b=n-d-1$,
\begin{align*}
H_{n,d, t;a,b} &=P_{d+1}\cup K_{a}\cup K_b+\{wv_i:t-2\le i\le t,w\in V(K_a)\}\\
&\quad +\{wv_i:t\le i\le t+2,w\in V(K_b)\},
\end{align*}
where $P_{d+1}:=v_1\dots v_{d+1}$, see Fig. \ref{F3}.
 Then $\mu_{n-d}(H_{n,d, t;a,b})<n-d+2$.
\end{lemma}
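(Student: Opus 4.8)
The plan is to peel off, by interlacing and Weyl's inequality, a single principal submatrix, thereby reducing the whole statement to one spectral estimate for a fixed graph that does not depend on $a$ and $b$; the estimate is then proved by deleting one vertex and exploiting a twin symmetry in each of the two resulting blocks.

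First I would write $G=H_{n,d,t;a,b}$, choose one vertex $w_1$ of $K_a$ and one vertex $w_2$ of $K_b$, and set $S=V(P_{d+1})\cup\{w_1,w_2\}$, a set of $p=d+3$ vertices. Let $B$ be the principal submatrix of $L(G)$ indexed by $S$; then $B=L(G[S])+D_S$, where $D_S$ is the diagonal matrix whose entry at a vertex $u\in S$ is the number of neighbours of $u$ lying outside $S$. The key observation is that $G[S]$ is independent of $a,b$: it is the fixed graph $F$ obtained from $P_{d+1}$ by attaching a vertex $w_1$ adjacent to $v_{t-2},v_{t-1},v_t$ and a vertex $w_2$ adjacent to $v_t,v_{t+1},v_{t+2}$. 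Moreover the largest diagonal entry of $D_S$ sits at $v_t$ and equals $(a-1)+(b-1)=n-d-3$, so $\rho_1(D_S)=n-d-3$. By Lemma \ref{interlacing}, $\mu_{n-d}(G)=\rho_{n-d}(L(G))\le\rho_3(B)$, and by Lemma \ref{cw} with $i=3$, $j=1$ we get $\rho_3(B)\le\rho_3(L(F))+\rho_1(D_S)=\mu_3(F)+(n-d-3)$. Hence the whole lemma follows once I establish the uniform inequality $\mu_3(F)<5$ (note this also covers the extreme case $a=b=1$, where $S=V(G)$ and $F=G$).

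To prove $\mu_3(F)<5$ I would delete $v_t$. The graph $F-v_t$ splits into a left component $F_L$, the triangle $v_{t-2}v_{t-1}w_1$ carrying the path tail $v_1\dots v_{t-2}$, and a right component $F_R$, the triangle $v_{t+1}v_{t+2}w_2$ carrying the path tail $v_{t+2}\dots v_{d+1}$. If $\widehat B$ is the principal submatrix of $L(F)$ on $V(F)\setminus\{v_t\}$, then $\widehat B=B_L\oplus B_R$, the direct sum of the two blocks, and Lemma \ref{interlacing} gives $\mu_3(F)\le\rho_2(\widehat B)$. It therefore suffices to show $\rho_1(B_L)<5$ and $\rho_1(B_R)<5$, for then every eigenvalue of $\widehat B$ lies below $5$. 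The two blocks are symmetric, so consider $B_L$. In $B_L$ the vertices $v_{t-1}$ and $w_1$ are adjacent twins: inside $F_L$ each is joined only to $v_{t-2}$ and to the other, and both carry diagonal entry $3$. Consequently the vector equal to $+1$ at $v_{t-1}$, $-1$ at $w_1$, and $0$ elsewhere is an eigenvector with eigenvalue $4$, and on the complementary (symmetric) subspace $B_L$ collapses to a tridiagonal matrix $\widetilde B_L$ living on the path $v_1\dots v_{t-2}$ together with the merged twin node, the hub $v_{t-2}$ being joined to the merged node by an edge of weight $2$. Crucially the merged node now carries effective diagonal $2$, so $5I-\widetilde B_L$ is a tridiagonal matrix whose diagonal entries are all at least $2$; running the continuant recurrence $D_k=a_kD_{k-1}-D_{k-2}$ from the merged‑node end yields $D_1=3$, $D_2=2\cdot3-2=4$, and thereafter positive, increasing values (the tail contributes the stable recurrence $D_k=3D_{k-1}-D_{k-2}$, and the endpoint a further positive step). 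Thus $5I-\widetilde B_L\succ0$, giving $\rho_1(B_L)<5$, and symmetrically $\rho_1(B_R)<5$.

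The whole difficulty is concentrated in this last positivity check. Every crude bound is off by about one: Weyl applied directly to $B=L(F)+D_S$ yields only $\mu_3(F)<6$, and Gershgorin or Weyl on the undeleted blocks yields only $\rho_1(B_L)\le 6$. The saving comes entirely from the twin reduction, which lowers the offending hub diagonal so that the continuant stays positive; without it the relevant tridiagonal minor turns negative (as it does, for instance, if one instead deletes $v_{t-1}$ and $v_{t+1}$ to reach a genuine path plus potential, where the shared vertex keeps diagonal $4$ and the bound $5$ is lost). I expect the only remaining bookkeeping to be the degenerate ends $t=3$ and $t=d-2$, where one tail collapses and the corresponding block is simply a triangle with a short tail; there the same continuant computation only gets easier and in fact gives $\rho_1=4$.
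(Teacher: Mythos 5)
Your proof is correct, but it takes a genuinely different route from the paper's. The paper deletes all of $V(K_a)$, recognizes the remaining graph as $G_{n-a,d,t+1}$, and combines Lemmas \ref{interlacing}, \ref{cw} and \ref{gndt} to get $\mu_{n-d}(H_{n,d,t;a,b})\le n-d+2$; because $\mu_{n-d-a}(G_{n-a,d,t+1})=n-d-a+2$ exactly, this chain is tight a priori, so the paper must then invoke the equality clause of Lemma \ref{cw} (existence of a common eigenvector of the two summands) and rule it out by propagating zero coordinates through the eigenvalue equation at $v_{t+1}$, $v_t$, $v_{t-1}$. You instead keep one representative vertex from each clique, so the induced subgraph $F$ on the $d+3$ retained vertices is independent of $a$ and $b$ and the external-degree diagonal has largest entry $n-d-3$ (at $v_t$; the entries $a-1$ and $b-1$ elsewhere are no larger) rather than $n-d-2$; this reduces everything to the single strict inequality $\mu_3(F)<5$, which you prove by deleting $v_t$, splitting the principal submatrix into two blocks, extracting the adjacent-twin eigenvalue $4$ in each block, and checking positive definiteness of the shifted quotient by a continuant recurrence. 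I verified the details: the symmetric subspace is indeed invariant, the merged node carries diagonal $2$ with squared coupling $2$, and the continuants $3,4,9,23,\dots$ stay positive through the degree-$1$ endpoint, including the degenerate cases $t=3$ and $t=d-2$. Your route buys a self-contained strict bound with genuine slack and no appeal to the rigidity part of Weyl's inequality or to Lemma \ref{gndt}, at the cost of the tridiagonal positivity computation; the paper's route is computationally lighter but sits exactly at the boundary and therefore needs the equality analysis.
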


\begin{figure}[htbp]
\centering
\includegraphics [width =8cm]{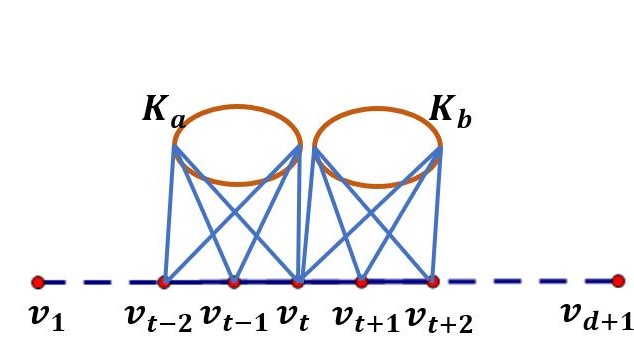}
\caption{The graph $H_{n,d, t;a,b}$.}
\label{F3}
\end{figure}

\begin{proof}
Let  $H=H_{n,d, t;a,b}-V(K_a)$. It is easy to see that $H\cong G_{n-a,d,t+1}$.
Let $B$ be the principal submatrix of $L(H_{n,d, t;a,b})$ corresponding to the vertices of $H$.
Then $B=L(H)+M$, where $M=\mbox{diag}(m_1,\dots,m_{n-a})$
with
\[
m_i=\begin{cases}
a&\mbox{ if }i=t-2,t-1,t,\\
0&\mbox{ otherwise.}
\end{cases}
\]
By Lemmas \ref{interlacing},  \ref{cw} and \ref{gndt}, one gets
\begin{align*}
\mu_{n-d}(H_{n,d, t;a,b}) &=\mu_{n-(n-a)+n-d-a}(H_{n,d, t;a,b})\\
&\le \rho_{n-d-a}(B)\\
& \le  \mu_{n-d-a}(G_{n-a,d,t+1})+\rho_{1}(M)\\
&=n-d-a+2+a=n-d+2.
\end{align*}
Suppose that $\mu_{n-d}(H_{n,d, t;a,b})=n-d+2$. Then
\[
\rho_{n-d-a}(B)
=\mu_{n-d-a}(G_{n-a,d,t+1})+a,
\]
so we have by Lemma \ref{cw} that
there exists a nonzero vector $\mathbf{x}$ such that
 \[
 L(H)\mathbf{x}=(n-d-a+2)\mathbf{x}
 \]
and
\[
M\mathbf{x}=a\mathbf{x}.
\]
From $M\mathbf{x}=a\mathbf{x}$,   we have $x_w=0$ for each $w\in V(H)\setminus\{v_{t-2},v_{t-1},v_{t}\}$. From $L(H)\mathbf{x}=(n-d-a+2)\mathbf{x}$ at $v_{t+1}$, we have \[
(n-d-a+1)x_{v_{t+1}}-x_{v_t}-x_{v_{t+2}}-\sum_{w\in V(K_b)}x_w=(n-d-a+2)x_{v_{t+1}},
\]
so $x_{v_t}=0$. Similarly, from $L(H)\mathbf{x}=(n-d-a+2)\mathbf{x}$ at $v_t$ and $v_{t-1}$, respectively, we have $x_{v_{t-1}}=x_{v_{t-2}}=0$. Thus, $\mathbf{x}$ is a zero vector, which is a contradiction. It follows that $\mu_{n-d}(H_{n,d, t;a,b})<n-d+2$.
\end{proof}

\begin{lemma}\label{5}
Let $n,d$ and $t$ be integers with $3\le t\le d-2$ and $d\le n-3$.
For positive integers $a,b$ and $c$  with  $a+b+c=n-d-1$,
let
\begin{align*}
H_{n,d, t;a,b,c} &=P_{d+1}\cup ((K_{a}\cup K_b)\vee K_c)+\{wv_i: t-2\le i\le t,w\in V(K_a)\}\\
&\quad +\{wv_i:t\le i\le t+2,w\in V(K_b)\}\\
&\quad +\{wv_i:t-1\le i\le t+1,w\in V(K_c)\},
\end{align*}
where $P_{d+1}:=v_1\dots v_{d+1}$, see Fig. \ref{F4}.  Then $\mu_{n-d}(H_{n,d, t;a,b,c})<n-d+2$.
\end{lemma}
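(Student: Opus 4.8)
The plan is to reduce $H_{n,d,t;a,b,c}$ to the graph $H_{n-c,d,t;a,b}$ treated in Lemma~\ref{05} by deleting the clique $K_c$, and then to transport the \emph{strict} bound furnished by that lemma through an interlacing step and a Weyl step. The whole point is that deleting $K_c$ (rather than $K_a$ or $K_b$) lands on a graph for which strict inequality is already available, so that no separate analysis of an equality case is needed. First I would set $H'=H_{n,d,t;a,b,c}-V(K_c)$ and observe that, because $K_c$ was joined to both $K_a$ and $K_b$ and attached only to $v_{t-1},v_t,v_{t+1}$, deleting it leaves precisely $P_{d+1}\cup K_a\cup K_b$ with $K_a$ attached to $v_{t-2},v_{t-1},v_t$ and $K_b$ attached to $v_t,v_{t+1},v_{t+2}$; that is, $H'\cong H_{n-c,d,t;a,b}$, a graph of order $n-c$. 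One checks the hypotheses of Lemma~\ref{05} for the parameters $(n-c,d,t;a,b)$: indeed $a+b=(n-c)-d-1$, and $d\le (n-c)-3$ since this is equivalent to $a+b\ge 2$, which holds because $a,b\ge 1$. Hence $\mu_{(n-c)-d}(H')<(n-c)-d+2=n-d-c+2$.

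Next I would let $B'$ be the principal submatrix of $L(H_{n,d,t;a,b,c})$ indexed by $V(H')$, so that $B'=L(H')+M'$, where $M'$ is the diagonal matrix recording, for each vertex of $H'$, the number of its neighbors lying in $K_c$. This count is $c$ for every vertex of $K_a$, every vertex of $K_b$, and each of $v_{t-1},v_t,v_{t+1}$, and is $0$ otherwise; in particular $\rho_1(M')=c$. Applying Lemma~\ref{interlacing} to the order-$(n-c)$ principal submatrix $B'$ of the order-$n$ matrix $L(H_{n,d,t;a,b,c})$ gives $\mu_{n-d}(H_{n,d,t;a,b,c})\le \rho_{n-d-c}(B')$, and then Lemma~\ref{cw} (with $i=n-d-c$, $j=1$) gives $\rho_{n-d-c}(B')\le \rho_{n-d-c}(L(H'))+\rho_1(M')=\mu_{(n-c)-d}(H')+c$.

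Finally I would combine these with the strict bound from Lemma~\ref{05}:
\[
\mu_{n-d}(H_{n,d,t;a,b,c})\le \mu_{(n-c)-d}(H')+c<(n-d-c+2)+c=n-d+2,
\]
which is the assertion. Here the strictness is inherited wholesale from Lemma~\ref{05}, and the Weyl perturbation by $\rho_1(M')=c$ exactly compensates for the upward shift of the threshold from $n-d-c+2$ to $n-d+2$.

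The hard part is recognizing \emph{which} clique to delete, and this is where the obstacle that motivates the chosen route lives. The naive move, deleting $K_a$ in imitation of the proof of Lemma~\ref{05}, produces the four-parameter graph $G_{n-a,d,t,c}$, whose relevant eigenvalue equals the threshold \emph{exactly} (Lemma~\ref{gndla}), so Weyl would only yield $\le n-d+2$. Worse, equality genuinely survives: in $G_{n-a,d,t,c}$ the clique $K_c$ is a clique-module of degree $(n-a)-d+1$, so every vector supported on $K_c$ with zero coordinate-sum is an eigenvector of $L(G_{n-a,d,t,c})$ for the eigenvalue $(n-a)-d+2$ and simultaneously an eigenvector of the corresponding diagonal perturbation (which is $a$ on all of $K_c$) for the eigenvalue $a$; thus the common-eigenvector criterion in Lemma~\ref{cw} is satisfied and no contradiction can be extracted. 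Deleting $K_c$ dissolves exactly this troublesome module and hands strictness off to Lemma~\ref{05}, which is why it is the correct reduction.
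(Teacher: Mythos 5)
Your proof is correct and takes essentially the same route as the paper's: delete $V(K_c)$, identify the remainder with $H_{n-c,d,t;a,b}$, and combine interlacing with Weyl's inequality and the strict bound of Lemma~\ref{05}, the perturbation $c$ exactly offsetting the shift in the threshold. (Your diagonal correction matrix, which also places $c$ on the vertices of $K_a$ and $K_b$, is in fact more accurate than the one displayed in the paper's proof, though both give $\rho_1(M)=c$ and hence the same conclusion.)
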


\begin{figure}[htbp]
\centering
\includegraphics [width =8cm]{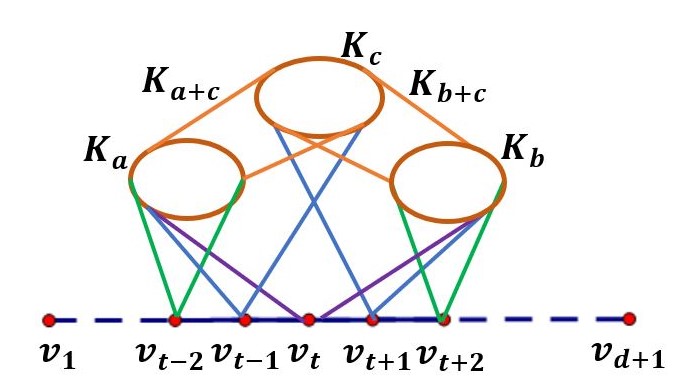}
\caption{The graph $H_{n,d, t;a,b,c}$.}
\label{F4}
\end{figure}

\begin{proof}
Let $H=H_{n,d, t;a,b,c}-V(K_c)$. Then
$H\cong H_{n-c,d,t;a,b}$.
Let $B$ be the principal submatrix of $L(H_{n,d, t;a,b,c} )$ corresponding to the vertices of $H$.
Note that $B=L(H)+\mbox{diag}(s_1,\dots,s_{n-c})$,
where
\[
s_i=\begin{cases}
c&\mbox{ if }i=t-1,t,t+1,\\
0&\mbox{ otherwise.}
\end{cases}
\]
It hence follows from Lemmas \ref{interlacing}, \ref{cw} and \ref{05} that
\begin{align*}
\mu_{n-d}(H_{n,d, t;a,b,c}) &\le \rho_{n-c-d}(B)\\
&\le \mu_{n-c-d}(H_{n-c,d,t;a,b})+c\\
&<n-c-d+2+c=n-d+2,
\end{align*}
as desired.
\end{proof}


For integers  $n$ and $t$ with $1\le t\le n-3$, we denote by $P_{n,t}^{++}$ the graph obtained from $P_{n-1}=v_1\dots v_{n-1}$ by  adding a new vertex $u$ and two new edges $uv_t$ and $uv_{t+2}$.

\begin{lemma}\label{hnt2}
Let $n$ and $t$ be integers with $1\le t\le n-3$. Then $\mu_2(P_{n,t}^{++})<4$.
\end{lemma}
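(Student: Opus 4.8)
The plan is to prove the equivalent statement that $L(P_{n,t}^{++})-4I_n$ has at most one nonnegative eigenvalue, since $\mu_2(P_{n,t}^{++})<4$ says exactly that at most one Laplacian eigenvalue is $\ge 4$. Write $G=P_{n,t}^{++}$ and $N=L(G)-4I_n$. The delicate feature is the $4$-cycle $C:=v_tv_{t+1}v_{t+2}u$, whose two degree-three corners $v_t,v_{t+2}$ create \emph{two} "positive directions". Because of them, the one-shot estimates all overshoot by one: for $2\le t\le n-4$ the graph $G$ has no Hamiltonian path, so any decomposition $L(G)=L(\text{path})+S$ forces $S$ to have two positive eigenvalues, and deleting a single vertex leaves a principal submatrix whose top eigenvalue already exceeds $4$; either route via Lemma~\ref{cw} or Lemma~\ref{interlacing} yields only $\mu_3<4$. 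The extra saving has to come from the fact that these corners sit at the ends of long "cold" pendant paths.

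First I would split off those paths. Partition $V(G)=C\cup R$ with $C=\{v_t,v_{t+1},v_{t+2},u\}$ and $R=\{v_1,\dots,v_{t-1}\}\cup\{v_{t+3},\dots,v_{n-1}\}$, and write $N$ in the corresponding block form. The block $N_{RR}$ is block-diagonal, one tridiagonal block per pendant path, each of the shape $L(P_\ell)-4I_\ell$ with its last diagonal entry increased by $1$ (recording the edge to $C$). That matrix is precisely the principal submatrix of $L(P_{\ell+1})$ obtained by deleting an endpoint, so by Lemma~\ref{interlacing} and Lemma~\ref{FD} its largest eigenvalue is below $\mu_1(P_{\ell+1})<4$. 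Hence $N_{RR}$ is negative definite, in particular invertible.

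Next I would eliminate $R$ by Haynsworth inertia additivity: since $N_{RR}$ is negative definite, the number of nonnegative eigenvalues of $N$ equals that of the $4\times4$ Schur complement $N/N_{RR}=N_{CC}-N_{CR}N_{RR}^{-1}N_{RC}$. Because each pendant path meets $C$ in exactly one vertex ($v_t$ or $v_{t+2}$) and the two paths lie in different blocks of $N_{RR}$, this correction is diagonal, contributing a "self-energy" only at $v_t$ and $v_{t+2}$. A short tridiagonal determinant computation (the determinants in numerator and denominator are $2k-1$ and $2k+1$) evaluates it to $\alpha=\tfrac{2\ell-1}{2\ell+1}<1$ for a path on $\ell$ vertices (and $0$ when the path is empty). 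Consequently
\[
N/N_{RR}=\bigl(L(C_4)-4I_4\bigr)+Y,
\]
where $L(C_4)$ is the Laplacian of the cycle $C$ and $Y$ is a nonnegative diagonal matrix supported on $\{v_t,v_{t+2}\}$, with entry $(\deg_G v_t-2)+\alpha$ at $v_t$; in every case (interior or boundary $t$) each entry of $Y$ lies in $\{0\}\cup[1,2)$, so $\rho_1(Y)<2$. Since $\sigma_L(C_4)=\{4,2,2,0\}$ gives $\rho_2(L(C_4)-4I_4)=-2$, Lemma~\ref{cw} with $i=2,\ j=1$ yields
\[
\rho_2(N/N_{RR})\le \rho_2\bigl(L(C_4)-4I_4\bigr)+\rho_1(Y)<-2+2=0.
\]
Thus $N/N_{RR}$, and hence $N$, has at most one nonnegative eigenvalue, i.e. $\mu_2(G)<4$; the cases $t=1$ or $t=n-3$ need no separate treatment because only $\rho_1(Y)<2$ is used.

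The main obstacle is exactly the one-eigenvalue gap flagged above: recognizing that one must peel off the pendant paths (which become negative definite after the $-4I$ shift) rather than the cycle, so that the elimination collapses the count of nonnegative eigenvalues from two to one. The single quantitative point to pin down is the strict bound $\alpha<1$ on the self-energy, equivalently that eliminating a pendant path raises the diagonal at its attachment vertex by strictly less than $1$; the closed form $\alpha=\tfrac{2\ell-1}{2\ell+1}$ makes the concluding application of Weyl's inequality immediate (and, as a check, tight).
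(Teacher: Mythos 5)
Your argument is correct, and it takes a genuinely different route from the paper's. The paper proves the bound for $2\le t\le n-4$ by adding the edge $uv_{t+1}$ to form $H\cong G_{n,n-2,t+1}$, for which $\mu_2(H)=4$ exactly by Lemma~\ref{gndt}; Weyl's inequality then gives $\mu_2(P_{n,t}^{++})\le 4$, and strictness is obtained from the equality case in Lemma~\ref{cw}: a common eigenvector would have to satisfy the three-term recurrence $x_{i+1}=-2x_i-x_{i-1}$ along both arms of the path, and matching the two resulting closed forms at $v_{t+2}$ forces $(n-4)x_1=0$, hence $\mathbf{x}=0$ (the boundary cases $t=1,n-3$ are handled separately via a Hamiltonian path and Lemma~\ref{cauchy}). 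You instead shift by $4I$, check via Lemma~\ref{interlacing} and Lemma~\ref{FD} that the two pendant-path blocks become negative definite, and eliminate them by a Schur complement; I verified your self-energy formula ($\det$ recurrence $D_\ell=-2D_{\ell-1}-D_{\ell-2}$ gives $D_\ell=(-1)^\ell(2\ell+1)$, whence $\alpha=\tfrac{2\ell-1}{2\ell+1}<1$), the diagonality of the correction (the two attachment columns lie in different blocks of $N_{RR}^{-1}$), and the final Weyl step $\rho_2\le -2+\rho_1(Y)<0$. Your approach buys a strict inequality with an explicit margin, no equality-case analysis, and a uniform treatment of $t=1$ and $t=n-3$; its cost is that it imports Haynsworth's inertia additivity, a standard fact but one not among the paper's quoted tools, whereas the paper's proof stays entirely within Lemmas~\ref{FD}--\ref{cauchy} at the price of the longer eigenvector computation.
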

\begin{proof} 

If $t=1$, then $P_{n,t}^{++}-uv_3\cong P_n$, so $\mu_2(P_{n,t}^{++})\le \mu_1(P_n)<4$  by Lemma \ref{cauchy}.
Similarly, if $t=n-3$, then $P_{n,t}^{++}-uv_t\cong P_n$ and so $\mu_2(P_{n,t}^{++})<4$.
Suppose in the following that $2\le t\le n-4$.

Let $H=P_{n,t}^{++}+uv_{t+1}$. Then $H\cong G_{n,n-2,t+1}$, so we have by Lemma \ref{gndt} that $\mu_2(H)=4$.
Let $u=v_n$. Then $L(H)=L(P_{n,t}^{++})+S$,
where $S=(s_{ij})_{n\times n}$ with \[
s_{ij}=\begin{cases}
1&\mbox{ if }i=j=t+1,n,\\
-1&\mbox{ if }\{i,j\}=\{t+1,n\},\\
0&\mbox{ otherwise.}
\end{cases}
\]
It is evident that $\sigma(S)=\{2,0^{[n-1]}\}$. It hence follow by Lemma \ref{cw}  that
$\mu_2(P_{n,t}^{++})\le \mu_2(H)+\rho_1(-S)= \mu_2(H)=4$.

Suppose that $\mu_2(P_{n,t}^{++})=4$. Then there exists a nonzero vector $\mathbf{x}$ such that
$S\mathbf{x}=0$ and
$L(P_{n,t}^{++})\mathbf{x}=4\mathbf{x}=L(H)\mathbf{x}$. Let $x_i=x_{v_i}$ for $i=1,\dots,n$. From $S\mathbf{x}=0$, one gets $x_{t+1}=x_n$.


From $L(P_{n,t}^{++})\mathbf{x}=4\mathbf{x}$ at $v_1$, we have
$x_1-x_2=4x_1$, so $x_2=-3x_1$.
Suppose that
$x_j=(-1)^{j-1}(2j-1)x_1$ for each $j\le i$ with $i=2,\dots,t-1$. From $L(P_{n,t}^{++})\mathbf{x}=4\mathbf{x}$ at $v_i$, we have $2x_i-x_{i-1}-x_{i+1}=4x_i$, so
$x_{i+1}=-2x_i-x_{i-1}=(-1)^{i-1}(2i-1)x_1$. This shows that  $x_i=(-1)^{i-1}(2i-1)x_1$ for $i=2,\dots,t$.
From $L(H)\mathbf{x}=4\mathbf{x}$ at $v_t$, we have
$3x_t-x_{t-1}-2x_{t+1}=4x_t$,
so $x_{t+1}=(-1)^{t}\cdot x_1$.
From $L(P_{n,t}^{++})\mathbf{x}=4\mathbf{x}$ at $v_{t+1}$, we have
$2x_{t+1}-x_{t}-x_{t+2}=4x_{t+1}$,
so
$x_{t+2}=(-1)^{t}(2t-3)x_1$.

Similarly,
$x_i=(-1)^{n-i-1}(2(n-i)-1)x_{n-1}$ for $i=t+2,\dots,n-2$ and $x_{t+1}=(-1)^{n-t}\cdot x_{n-1}$.
It follows that
$x_{n-1}=(-1)^nx_1$. Thus  $x_{t+2}=(-1)^{n-t-3}(2(n-t-2)-1)x_{n-1}=(-1)^{t+1}(2(n-t-2)-1)x_1$.
Therefore, $(-1)^{t+1}(2(n-t-2)-1)x_1=(-1)^{t}(2t-3)x_1$, i.e., $(n-4)x_1=0$, i.e.,
$x_1=0$.  So $\mathbf{x}=0$, a contradiction. Thus $\mu_2(P_{n,t}^{++})<4$.
\end{proof}



\begin{lemma}\label{hq2}
Let $n,d$, $t$  and $a$ be integers with $2\le d\le n-3$ and $2\le t\le d$.
Let $G=G_{n,d,t}$ with diametral path $P=v_1\dots v_{d+1}$.
Let $w$ be a vertex outside $P$.  Then $\mu_{n-d}(G-v_iw)<n-d+2$ for $i=t-1,t,t+1$.
\end{lemma}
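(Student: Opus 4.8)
The plan is to bound $\mu_{n-d}(G-v_iw)$ from above by passing to a carefully chosen principal submatrix and then applying Weyl's inequality, rather than trying to use edge interlacing (Lemma \ref{cauchy}) directly. The latter only gives $\mu_{n-d}(G-v_iw)\le\mu_{n-d}(G)=n-d+2$ via Lemma \ref{gndt}, which is not strict, so it cannot establish the claim; the submatrix approach built on Lemmas \ref{interlacing}, \ref{cw} and \ref{hnt2} is needed to create the required slack.

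First I would fix $i\in\{t-1,t,t+1\}$ and set $H=(G-v_iw)[V(P)\cup\{w\}]$, the subgraph induced on the diametral path together with the single outside vertex $w$; it has $d+2$ vertices. Since $P$ is an induced path in $G_{n,d,t}$ and $w$ retains exactly two of its three path-neighbours $v_{t-1},v_t,v_{t+1}$ after deleting $v_iw$, the graph $H$ is the path $P_{d+1}$ with $w$ attached to two of those three vertices. Writing $B$ for the principal submatrix of $L(G-v_iw)$ indexed by $V(H)$, we have $B=L(H)+M$, where $M$ is the diagonal matrix of external degrees; a direct count shows $M$ has entry $n-d-2$ at each of $w,v_{t-1},v_t,v_{t+1}$ and $0$ elsewhere, so $\rho_1(M)=n-d-2$. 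Interlacing (Lemma \ref{interlacing}) gives $\mu_{n-d}(G-v_iw)=\rho_{n-d}(L(G-v_iw))\le\rho_2(B)$, and Weyl's inequality (Lemma \ref{cw}) gives $\rho_2(B)\le\rho_2(L(H))+\rho_1(M)=\mu_2(H)+(n-d-2)$. Thus everything reduces to the claim $\mu_2(H)<4$.

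The key step is therefore to establish $\mu_2(H)<4$, and here the three values of $i$ split into two structural types. For $i\in\{t-1,t+1\}$ the two retained neighbours of $w$ are \emph{consecutive} on $P$, so $w$ together with those two vertices forms a triangle and $H$ has a Hamiltonian path; concretely $L(H)=L(P_{d+2})+S$, where $S$ is the Laplacian of a single edge joining two vertices at distance two along that Hamiltonian path, so $\sigma(S)=\{2,0^{[d+1]}\}$ and $\rho_2(S)=0$. Weyl then yields $\mu_2(H)\le\mu_1(P_{d+2})+\rho_2(S)=\mu_1(P_{d+2})<4$ by Lemma \ref{FD}. For $i=t$ the retained neighbours $v_{t-1},v_{t+1}$ are at distance two on $P$, producing a $4$-cycle, and $H\cong P_{d+2,\,t-1}^{++}$; here I would invoke Lemma \ref{hnt2} directly to conclude $\mu_2(H)<4$.

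The main obstacle is precisely this $i=t$ case. Unlike the other two, its induced subgraph is unicyclic with a $4$-cycle and is \emph{not} a path plus a single chord: there is no spanning path of $H$ on which the remaining edge is a single chord, and the obvious spanning-tree-plus-edge decomposition only gives a tree of maximum degree $3$ whose largest Laplacian eigenvalue is at least $4$, so the simple Weyl argument against $\mu_1(P_{d+2})$ breaks down. This is exactly why the more delicate eigenvector computation recorded in Lemma \ref{hnt2} is required. Combining the three cases, $\mu_{n-d}(G-v_iw)\le\mu_2(H)+(n-d-2)<4+(n-d-2)=n-d+2$, as desired.
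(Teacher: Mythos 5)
Your proposal is correct and follows essentially the same route as the paper: pass to the induced subgraph $H$ on $V(P)\cup\{w\}$, show $\mu_2(H)<4$ (via $\mu_1(P_{d+2})$ when the retained neighbours of $w$ are consecutive, and via Lemma \ref{hnt2} with $H\cong P_{d+2,t-1}^{++}$ when $i=t$), and then combine Lemmas \ref{interlacing} and \ref{cw} with the diagonal perturbation $\rho_1(M)=n-d-2$. The only cosmetic difference is that for $i\in\{t-1,t+1\}$ you bound $\mu_2(H)$ by Weyl's inequality against the single-edge Laplacian $S$, whereas the paper gets the same bound from the edge-interlacing Lemma \ref{cauchy}.
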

\begin{proof}
Let $H$ be the subgraph of $G-v_{t-1}w$ induced by $V(P)\cup \{w\}$, which is obtainable from the path $P_{d+2}=v_1\dots v_twv_{t+1}\dots v_{d+1}$ by adding an edge $v_tv_{t+1}$. It follows from Lemma \ref{cauchy} that $\mu_2(H)\le \mu_1(P_{d+2})<4$. Let $B$  be the principal submatrix of $L(G-v_{t-1}w)$ corresponding to vertices of $H$. Then by Lemmas  \ref{interlacing} and \ref{cw},
\begin{align*}
\mu_{n-d}(G-v_{t-1}w)&=\mu_{n-(d+2)+2}(G-v_{t-1}w)\\
&\le \rho_{2}(B)\\
&\le \mu_2(H)+n-d-2\\
&<4+n-d-2=n-d+2.
\end{align*}
The proof of $\mu_{n-d}(G-v_{t+1}w)<n-d+2$ is similar.

Let $H'$ be the subgraph of $G-v_tw$ induced by $V(P)\cup \{w\}$, which is obtainable from the path $P_{d+1}=v_1\dots v_{d+1}$ by adding edges $v_{t-1}w$ and $v_{t+1}w$. By Lemma \ref{hnt2}, $\mu_2(H)<4$. Let $B'$ be the principal submatrix of $L(G-v_tw)$ corresponding to vertices of $H'$. Then by Lemmas \ref{interlacing} and \ref{cw},
\begin{align*}
\mu_{n-d}(G-v_{t}w)&=\mu_{n-(d+2)+2}(G-v_{t}w)\\
&\le \rho_{2}(B')\\
&\le \mu_2(H')+n-d-2\\
&<4+n-d-2=n-d+2. \qedhere
\end{align*}
\end{proof}

\begin{lemma}\label{hq1}
Let $n,d$, $t$  and $a$ be integers with
 $3\le d\le n-3$, $2\le t\le d-1$ and $1\le a\le n-d-2$.
Let $G=G_{n,d,t,a}$ with diametral path $P=v_1\dots v_{d+1}$.
If $e$ is an edge joining one of $v_{t-1}, v_t, v_{t+1}, v_{t+2}$ and one vertex outside $P$, then
$\mu_{n-d}(G-e)<n-d+2$.
\end{lemma}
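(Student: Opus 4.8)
The plan is to reproduce the argument of Lemma~\ref{hq2}, isolating the diametral path $P$ together with the single outside endpoint of the deleted edge and controlling the resulting $(d+2)$-vertex induced subgraph by interlacing and Weyl's inequality. First I would let $w$ denote the endpoint of $e$ lying outside $P$ and set $H$ to be the subgraph of $G-e$ induced by $V(P)\cup\{w\}$, a graph on $d+2$ vertices. By the construction of $G_{n,d,t,a}$, the vertex $w$ is adjacent in $G$ to three consecutive path vertices: to $v_{t-1},v_t,v_{t+1}$ if $w$ lies in the $K_a$-part, and to $v_t,v_{t+1},v_{t+2}$ if $w$ lies in the $K_b$-part, where $b=n-d-1-a$. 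Since $2\le t\le d-1$, all four of $v_{t-1},v_t,v_{t+1},v_{t+2}$ are genuine vertices of $P$, so no boundary degeneracy arises, and deleting $e$ removes exactly one of $w$'s three path-neighbors, leaving it adjacent to two of them.

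The heart of the matter is to show $\mu_2(H)<4$ in every case, and this is where I would run the case analysis on which edge $e$ is. The claim is that the two surviving neighbors of $w$ on $P$ are either consecutive or at distance two along $P$. If they are consecutive, then $H$ is obtained from the $(d+2)$-vertex path $v_1\cdots v_s\,w\,v_{s+1}\cdots v_{d+1}$ by adding the single chord $v_sv_{s+1}$; thus $H$ equals $P_{d+2}$ plus one edge, so Lemma~\ref{cauchy} gives $\mu_2(H)\le\mu_1(P_{d+2})$, which is less than $4$ by Lemma~\ref{FD}. If the two surviving neighbors are at distance two, then $H\cong P^{++}_{d+2,s}$ for the appropriate $s$, and Lemma~\ref{hnt2} yields $\mu_2(H)<4$ directly. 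The distance-two alternative occurs precisely when $e=v_tw$ with $w$ in the $K_a$-part (surviving neighbors $v_{t-1},v_{t+1}$, giving $s=t-1$) or when $e=v_{t+1}w$ with $w$ in the $K_b$-part (surviving neighbors $v_t,v_{t+2}$, giving $s=t$); all remaining edge-choices, namely $e=v_{t-1}w$, $e=v_{t+2}w$, and the complementary subcases of $e=v_tw$ and $e=v_{t+1}w$, leave consecutive neighbors and so fall into the first alternative.

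Finally I would transfer this bound to $G-e$ by the interlacing-plus-Weyl device used repeatedly above. Let $B$ be the principal submatrix of $L(G-e)$ indexed by $V(H)$; since $H$ is induced, $B=L(H)+M$ with $M=\mbox{diag}(m_1,\dots,m_{d+2})$ recording, for each vertex of $H$, the number of its neighbors in $G-e$ outside $V(H)$. As there are only $n-(d+2)=n-d-2$ vertices outside $V(H)$, every $m_i\le n-d-2$, whence $\rho_1(M)\le n-d-2$. Lemma~\ref{interlacing} with $p=d+2$ gives $\mu_{n-d}(G-e)=\rho_{n-(d+2)+2}(L(G-e))\le\rho_2(B)$, and Lemma~\ref{cw} gives $\rho_2(B)\le\mu_2(H)+\rho_1(M)$. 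Combining these with $\mu_2(H)<4$ yields $\mu_{n-d}(G-e)\le\mu_2(H)+(n-d-2)<4+(n-d-2)=n-d+2$, as required. I expect the only genuine work to be the case enumeration of the second paragraph: once $H$ is correctly identified as either ``$P_{d+2}$ plus one chord'' or a $P^{++}_{d+2,s}$ in each case, the spectral estimate is routine and uniform across all admissible $e$.
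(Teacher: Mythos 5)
Your proposal is correct and follows essentially the same route as the paper: the same induced subgraph $H$ on $V(P)\cup\{w\}$, the same dichotomy (two surviving path-neighbors of $w$ consecutive, giving $P_{d+2}$ plus a chord handled via Lemma~\ref{cauchy}, versus at distance two, giving $P^{++}_{d+2,s}$ handled via Lemma~\ref{hnt2}), and the same interlacing-plus-Weyl transfer with the diagonal bound $n-d-2$. Your explicit classification of which choices of $e$ fall into which alternative matches the paper's case enumeration exactly.
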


\begin{proof} 
Suppose first that $e=v_{t-1}w$. Let $H$ be the subgraph of $G-e$ induced by $V(P)\cup\{w\}$, which is obtainable
from the path  $P_{d+2}=v_1\dots v_twv_{t+1}\dots v_{d+1}$ by adding edge $v_tv_{t+1}$. By Lemma \ref{cauchy}, $\mu_{2}(H)\le \mu_1(P_{d+2})<4$.
Let $B$  be the principal submatrix of $L(G-v_{t-1}w)$ corresponding to  vertices of $H$.
By Lemmas \ref{interlacing} and \ref{cw},
\begin{align*}
\mu_{n-d}(G-e)&=\mu_{n-(d+2)+2}(G-e)\\
&\le \rho_{2}(B)\\
&\le \mu_2(H)+n-d-2\\
&<4+n-d-2=n-d+2.
\end{align*}
If $e=v_{t+2}w$, then similar argument leads to  $\mu_{n-d}(G-e)<n-d+2$.

Next, suppose that $e=v_tw$ with $w\in N_G(v_{t-1})$. Let $H'$ be the subgraph of $G-e$ induced by $V(P)\cup\{w\}$.  It is evident that
$H'\cong P_{d+2,t-1}^{++}$. By Lemma \ref{hnt2}, $\mu_{2}(H')<4$.
Let $B'$ be the principal submatrix of $L(G-e)$ corresponding to  vertices of $H'$.
By Lemmas \ref{interlacing} and \ref{cw},
\begin{align*}
\mu_{n-d}(G-e)&=\mu_{n-(d+2)+2}(G-e)\\
&\le \rho_{2}(B')\\
&\le \mu_2(H)+n-d-2\\
&<4+n-d-2=n-d+2.
\end{align*}
Similar arguments applies to the remaining cases: $e=v_tw$ with $w\in N_G(v_{t+2})$, $e=v_{t+1}w$ with $w\in w\in N_G(v_{t-1})$, and
$e=v_{t+1}w$ with $w\in w\in N_G(v_{t+1})$.
\end{proof}

Now,  we are ready to give a proof of  Theorem \ref{x}.

%

\begin{proof}[Proof of Theorem \ref{x}]
By Lemmas \ref{gndt} and \ref{gndla}, if $G$ satisfies (i) or (ii) in Theorem \ref{x}, then $m_G[n-d+2,n]=n-d$.

Suppose that $G$ does not satisfy (i) and (ii) in Theorem \ref{x}.	
That is,
 $G\ncong G_{n,d,t}$ for any $t$ with $2\le t\le d$, and
 $G\ncong G_{n,d,r,a}$ for any $r,a$ with $2\le r\le d-1$ and $1\le a\le n-d-2$.
It suffices to show that
 $m_G[n-d+2,n]<n-d$. As
\[
m_G[n-d+2,n]< n-d \Leftrightarrow \mu_{n-d}(G)<n-d+2,
\]
it suffices to show that $\mu_{n-d}(G)<n-d+2$.

Let $P:=v_1\dots v_{d+1}$ be a diametral path of $G$. As $G\ne P_{d+1}$, we have $d\le n-2$.
Note that $L(P)+M$ is the principal submatrix of $L(G)$ corresponding to vertices on $P$, where
\[
M=\mbox{diag}(\delta_G(v_1)-1, \delta_G(v_2)-2, \dots, \delta_G(v_d)-2, \delta_G(v_{d+1})-1).
\]
If there is no vertex in $P$ adjacent to each vertex outside $P$ in $G$, then each entry $M_{ii}$ is at most
$n-d-2$,  so
$\rho_1(M)\le n-d-2$, and   by Lemmas \ref{interlacing} and  \ref{cw} , one gets
\begin{align*}
\mu_{n-(d+1)+1}(G)
&\le \rho_1(L(P)+M)\\
&\le  \mu_1(P_{d+1})+\rho_1(M)\\
&< 4+n-d-2\\
&=n-d+2,
\end{align*}
i.e., $\mu_{n-d}(G)<n-d+2$.
Assume that there is at least one vertex in $P$ adjacent to each vertex outside $P$.
Let $v_t$ be  such a vertex.
As $P$ is a diametral path of $G$, any vertex outside $P$ is adjacent to at most three consecutive vertices on $P$.
Let $V_1=\cup_{u\in V(G)\setminus V(P)}N_P(u)$, $\alpha=\min\{s:v_s\in V_1\}$ and $\beta=\max\{s:v_s\in V_1\}$.
Then $t-\alpha\le 2$ and $\beta-t\le 2$. So $\beta-\alpha\le 4$.

\noindent
{\bf Case 1.}  $\beta-\alpha=4$.

In this case, $\alpha=t-2$ and $\beta=t+2$. Let $N_1=N_G(v_{t-2})\setminus V(P)$,  $N_2=N_G(v_{t+2})\setminus V(P)$ and $N_3=V(G)\setminus (V(P)\cup N_1\cup N_2)$. Set $a=|N_1|$, $b=|N_2|$ and $c=|N_3|=n-d-1-a-b$. As the diameter of $G$ is $d$, we have
$N_1\cap N_2=\emptyset$, $N_G(v_{t-1})\cap N_2=\emptyset$
$N_G(v_{t+1})\cap N_1=\emptyset$, and  any vertex from $N_1$ is not adjacent to any vertex from $N_2$.
Thus,  $G[N_1\cup N_3]$ is a spanning subgraph of $K_{a+c}$,
$G[N_2\cup N_3]$ is a spanning subgraph of $K_{b+c}$,
$N_G(v_{t-1})\setminus V(P)\subseteq N_1\cup N_3$, and $N_G(v_{t+1})\setminus V(P)\subseteq N_2\cup N_3$.
If $c=0$, then $G$ is a spanning subgraph
of a graph obtained from $P\cup (K_a\cup K_b)$ with a diametral path $P$ by adding all possible edges to join each of  $v_{t-2}, v_{t-1}, v_{t}$ on $P$ and all vertices of $K_a$ outside $P$,
and edges to join each of  $v_{t}, v_{t+1}, v_{t+2}$ on $P$ and all vertices of $K_b$ outside $P$. This graph is $H_{n,d, t;a,b}$ given in Lemma \ref{05}.
It then follows by Lemmas \ref{cauchy} and \ref{05} that $\mu_{n-d}(G)\le \mu_{n-d}(H_{n,d, t;a,b})<n-d+2$, as desired.
Otherwise, $c\ge 1$, and $G$  is a spanning subgraph
of a graph obtained from $P\cup ((K_a\cup K_b)\vee K_c)$ with a diametral path $P$ by adding all possible edges to join  $v_{t-2}$ and  all vertices of $K_a$ outside $P$, $v_{t+2}$ and  all vertices of $K_b$ outside $P$, $v_{t-1}$ and  all vertices of $K_a\cup K_c$ outside $P$, $v_{t+1}$ and  all vertices of $K_b\cup K_c$ outside $P$, and $v_{t}$ and  all vertices outside $P$. This graph is $H_{n,d, t;a,b,c}$ given in Lemma \ref{5}.
It then follows by Lemmas  \ref{cauchy} and \ref{5} that $\mu_{n-d}(G)\le \mu_{n-d}(H_{n,d, t;a,b,c})<n-d+2$, as desired.

\noindent
{\bf Case 2.} $\beta-\alpha=3$.

Obviously, $\{t-\alpha, \beta-t\}=\{1,2\}$.
 Assume that $V_1\subseteq \{v_{t-1},v_t,v_{t+1},v_{t+2}\}$.
Let $W_1=N_G(v_{t-1})\setminus V(P)$.
Let $a=|W_1|$ 
and $b=|V(G)\setminus (V(P)\cup W_1)|=n-d-1-a$.
By the definition of $\alpha$ and $\beta$, we have $a, b\ge 1$.
As the diameter of $G$ is $d$, $N_G(v_{t+2})\cap W_1=\emptyset$, so
$N_G(v_{t+2})\setminus V(P)\subseteq V(G)\setminus (V(P)\cup W_1)$.
Thus $G$ is a spanning subgraph of the graph obtained from $P\cup K_{n-d-1}$ with a diametral path $P$ by adding all possible edges to join  $v_{t-1}$  and vertices of $W_1$,
$v_{t+2}$  and vertices of $V(G)\setminus (V(P)\cup W_1)$, and any of $v_t, v_{t+1}$ and vertices outside $P$, which is $G_{n,d,t,a}$. By assumption, $G\ncong G_{n,d,t,a}$. So $G$ is a spanning subgraph of $G_{n,d,t,a}-e$ for some $e\in E(G_{n,d,t,a})$. As $G$ is connected, $G_{n,d,t,a}-e$ is connected.
As $G$ is a spanning subgraph of $G_{n,d,t,a}-e$, we have by Lemma \ref{cauchy} that
$\mu_{n-d}(G)\le \mu_{n-d}(G_{n,d,t,a}-e)$. So, it suffices to show that
$\mu_{n-d}(G_{n,d,t,a}-e)<n-d+2$.
For convenience, let $G'=G_{n,d,t,a}$.

If $e$ joins two vertices in $W_1$, then $G'-e\cong G'-v_tw$ for some $w\in W_1$.
If $e$ joins two vertices in $V(G')\setminus (V(P)\cup W_1)$, then $G'-e\cong G'-v_{t+1}w$ for some $w\in V(G')\setminus (V(P)\cup W_1)$.
If $e$ joins a vertex in $W_1$ and a vertex in $V(G')\setminus (V(P)\cup W_1)$, then $G'-e\cong G'-v_{t+1}w$ for some $w\in W_1$.
If $e$ lies on $P$, then it is only possible for $e=v_{j}v_{j+1}$ with $j=t-1,t, t+1$. Note that $G'-v_{t-1}v_t\cong G'-v_{t-1}w$, $G'-v_{t}v_{t+1}\cong G'-v_{t+1}w$ and $G'-v_{t+1}v_{t+2}\cong G'-v_{t+2}z$
for some $w\in W_1$ and $z\in V(G')\setminus (V(P)\cup W_1)$.
So, we may assume that $e$ joins one of $v_{t-1},\dots,v_{t+2}$ and a vertex outside $P$.
Therefore, the result follows from Lemma \ref{hq1}.


\noindent
{\bf Case 3.}  $\beta-\alpha\le 2$.

Note that $V_1\subseteq \{v_{t-1},v_t,v_{t+1}\}$.
Then $G$ is a  spanning subgraph of the graph with diametral path $P$ obtained by adding all possible edges  with at most one end vertex $v_{t-1},v_t,v_{t+1}$ on $P$. This graph is definitely
$G_{n,d,t}$. By assumption, $G\ncong G_{n,d,t}$. So $G$ is a  spanning subgraph of $G_{n,d,t}-e$ for some $e\in E(G_{n,d,t})$. Note that $G_{n,d,t}-e$ is connected.
It suffices to show that $\mu_{n-d}(G'-e)<n-d+2$, where
 $G'=G_{n,d,t}$.

If $e$ joins two vertices outside $P$, then $G'-e\cong G'-v_tw$, where $w\in V(G')\setminus V(P)$.
If $e$ lies on $P$, then $e=v_{t-1}v_t, v_tv_{t+1}$, and $G'-e\cong G'-v_{t-1}w, G'-v_{t+1}w$, where $w\in V(G')\setminus V(P)$.
So we may assume $e=v_{t-1}w, v_tw, v_{t+1}w$ for some vertex $w$ outside $P$.

If $d\le n-3$, then the result follows from Lemma \ref{hq2}.
Suppose that $d=n-2$.
If $e=v_{t-1}w$, then $P_n=v_1\dots v_twv_{t+1}\dots v_{n-1}$ is a path in $G'-e$ and
$G'-e-E(P_n)$ has a single edge $v_tv_{t+1}$. By Lemma \ref{cauchy},  $\mu_2(G'-e)\le \mu_1(P_n)<4$.
Similar argument applies if $e=v_{t+1}w$.
If $e=v_tw$, then $G'-e\cong P_{n,t-1}^{++}$,  so we have $\mu_{n-d}(G'-e)<n-d+2$ by Lemma \ref{hnt2}.

%
%

Combining the above three cases,  we have $\mu_{n-d}(G)<n-d+2$, so
$m_G[n-d+2,n]< n-d$. This completes the proof.
\end{proof}

\bigskip

\noindent {\bf Acknowledgement.}
This work was supported by the National Natural Science Foundation of China (No.~12071158).

\end{document}